\documentclass[reqno,a4paper,11pt]{amsart}
\usepackage{amsmath,amsthm,amssymb,mathrsfs}
\usepackage[
left = 2.25cm,
right = 2.25cm,
top = 2.2 cm,
bottom = 2.2 cm
]{geometry}
\usepackage{graphicx,xcolor,tikz,mathtools}
\usepackage{bm,stmaryrd}
\usepackage{amsaddr}
\usepackage{enumitem}
\usepackage[toc,page]{appendix}
\usepackage[
pdfencoding=auto,
colorlinks = true,
citecolor = blue,
linkcolor = blue,
anchorcolor = blue,
urlcolor = blue, 
filecolor=blue,
pdfkeywords={}
]{hyperref}
\usepackage{esint} 

\usepackage{cite}
\usepackage{todonotes}
\makeatletter
\define@key{todonotes}{Helmut}[]{%
	\setkeys{todonotes}{author=\textbf{Helmut},inline,color=red!10}}%
\define@key{todonotes}{Andrea}[]{%
	\setkeys{todonotes}{author=\textbf{Andrea},inline,color=green!20}}%
\define@key{todonotes}{Yadong}[]{%
	\setkeys{todonotes}{author=\textbf{Yadong},inline,color=cyan!20}}%
\makeatother

\theoremstyle{plain}
\newtheorem{theorem}{Theorem}[section]

\newtheorem{definition}[theorem]{Definition}
\newtheorem{lemma}[theorem]{Lemma}

\newtheorem{remark}[theorem]{Remark}
\theoremstyle{definition}

\numberwithin{equation}{section}

\def\sf{\mathbb S^{d-1}}

\newcommand{\bn}{\mathbf{n}}

\newcommand{\bH}{\mathbf{H}}

\renewcommand{\d}{\mathrm{d}}
\newcommand{\dx}{\,\d x}
\newcommand{\dt}{\,\d t}
\newcommand{\dxdt}{\,\d x \d t}

\newcommand{\ddt}{\frac{\d}{\d t}}

\newcommand{\ptial}[1]{ \partial_{#1} }
\newcommand{\pt}{\ptial{t}}
\newcommand{\onehalf}{\frac{1}{2}}

\newcommand{\norm}[1]{\left\Vert #1 \right \Vert}

\newcommand{\normmm}[1]{\left\vert #1 \right\vert}

\def\Div{\mathrm{div}\,}
\DeclareMathOperator*{\esssup}{\mathrm{esssup}}
\newcommand{\dist}{\mathrm{dist}}

\DeclarePairedDelimiter\ceil{\lceil}{\rceil}
\DeclarePairedDelimiter\floor{\lfloor}{\rfloor}
\def\xhi{\chi}

\def\Th#1{{T_h(#1)}}

\def\Hz{H^{1}({\mathbb T^d})}
\def\normh#1{\norm{#1}_{L^2({\mathbb T^d})}}

\def\bL{\mathbf L}

\def\Ts{{T^*}}
\def\tF{\widetilde{E}}
\def\wchi{\widehat\chi}

\def\tmmu{\normmm{\mu_t}_{\mathbb{S}^{d-1}}}
\def\om{{\mathbb T^d}}

\def\wtilde{\widetilde}
\def\R{\mathbb R}

\def\N{\mathbb N}
\def\R{\mathbb R}

\def\tw{w}

\def\Ld#1{\norm{#1}_{(L^\infty({\mathbb T^d})\cap H^1({\mathbb T^d}))'}}
\def\dual{(L^\infty({\mathbb T^d})\cap H^1({\mathbb T^d}))'}

\allowdisplaybreaks[4]

\begin{document}
	
	\title[De Giorgi solutions to MCF: minimizing movements]{De Giorgi varifold solutions to Mean Curvature Flow:\\ a minimizing movements approach}
    
	\author[T. Laux]{
		\small
		Tim Laux$^{\ast}$
	}

	\address{
		$^{\ast}$Institut f\"{u}r Mathematik \& Interdisziplin\"{a}res Zentrum f\"{u}r Wissenschaftliches Rechnen, Universit\"{a}t Heidelberg, Im Neuenheimer Feld 205, 69120 Heidelberg, Germany
	}
	\email{tim.laux@math.uni-heidelberg.de}
    
	\author[A. Poiatti]{
		\small
		Andrea Poiatti$^\dagger$
	}

	\address{
		  $^\dagger$Dipartimento di Scienze Matematiche, Fisiche e Informatiche, \\ Università degli Studi di Parma, 43124 Parma, Italy
	}
	\email{andrea.poiatti@unipr.it}


		\subjclass[2020]{ 53E10, 53A10, 49Q20, 28A75}
		\keywords{mean curvature flow, volume-preserving mean curvature flow, De Giorgi varifold weak solutions, minimizing movements, De Giorgi interpolation}	
	
\begin{abstract}
	We propose an alternative existence proof of global weak solutions to mean curvature flow and volume preserving mean curvature flow. 
    We prove for the first time for a minimizing movements scheme the unconditional convergence towards a varifold solution, here a De~Giorgi solution.
    The argument is purely variational and does not rely on comparison principles. 
    The key novelty is an alternative proxy for the completely degenerate $L^2$ distance that is more robust than the one of Almgren-Taylor-Wang and Luckhaus-Sturzenhecker.
  
 \end{abstract}
	
	\maketitle
	
	\section{Introduction}
    
The (volume-preserving) Mean Curvature Flow (MCF) can be formally
interpreted as the $L^2$-gradient flow of the perimeter functional, possibly for fixed volume configurations.
Formally, this can be read off from the energy dissipation relation
\[
    \frac{\d}{\d t}\mathrm{Area}(\Sigma(t)) = -\int_{\Sigma(t)} V^2 \,\d S,
\]
which is valid for smooth solutions.
However, this gradient flow structure cannot be directly exploited for the purpose of well-posedness results. Indeed, the $L^2$
(geodesic) distance $\inf \int_{\tilde \Sigma(t)} \tilde V^2 \,\d S \, \d t$ is completely degenerate: any two configurations have zero $L^2$ distance \cite{Michor}.
Nevertheless, \textcolor{black}{in two independent landmark papers,} Almgren-Taylor-Wang \cite{ATW} and Luckhaus-Sturzenhecker \cite{LS} introduced an implicit time-discretization of the flow.
It is a natural time discretization of the Mean Curvature Flow and in fact inspired\footnote{See~\cite[Section 2.5.3, p.\ 33]{DeGiorgiSelectedPapers}.} De Giorgi's general definition of minimizing movements in abstract metric spaces~\cite{DeGiorgiMarinoTosques}, which led to a whole body of research on gradient flows, see e.g.~\cite{AGS}. De~Giorgi's basic idea is simple: in order to construct approximate solutions to a gradient flow, for a given time-step size $h>0$, starting from an initial condition $x_0$, successively solve variational problems of the form
\[
    x_n \in \arg\min_x E(x) + \frac{1}{2h} d^2(x,x_{n-1}).
\]
Ironically, due to the degeneracy of the metric in the case of the Mean Curvature Flow, the subsequent abstract theory~\cite{DeGiorgiMarinoTosques,AGS} does not apply to the framework of this original motivation. 
A central part of the work of Almgren-Taylor-Wang~\cite{ATW} and Luckhaus-Sturzenhecker~\cite{LS} is the introduction of a proxy for the squared $L^2$ distance given by $ d^2_{ATW,LS}(A,B) := \int_{A\triangle B} 2\dist(x,\partial A)\,\d x$.
However, it requires regularity theory of almost minimal surfaces even to prove a conditional convergence result for the resulting minimizing movements scheme, where one needs to assume the convergence of the perimeter in the limit of vanishing time-step size~\cite{LS} (see also \cite{Morini, Julin} for further properties of this scheme). 
This condition can only be shown under smoothness assumptions~\cite{Julin} or for mean convex singularities~\cite{DePhilippisLaux}. 
Another approach to construct a well-defined $L^2$-type distance due to Michor and Mumford~\cite{Michor} is to introduce a curvature regularization in the metric of the form $\int _{\partial A} (1+\alpha \kappa^2)V^2 \,\d S $. While this leads to a well-defined geodesic distance $d_{MM}^2(A,B) = \inf \int_0^1 \int _{\partial A_t} (1+\alpha \kappa^2)V^2 \,\d S \,\, \d t$, it is not useful for a minimizing movements scheme, as this generates higher-order terms in the evolution equation that cannot be controlled. 
Yet another regularization of the $L^2$ distance appeared in the Esedo\u{g}lu-Otto minimizing movements interpretation~\cite{EsedogluOtto} of the highly efficient Merriman-Bence-Osher scheme~\cite{MBO}, where $d_{EO,h}^2 (A,B) = c\sqrt{h} \int |G_{h}\ast (\xhi_A-\xhi_B)|^2\,\d x$ with $G_h$ the heat kernel at time $h$. This insight led to a series of (conditional) convergence results to distributional~\cite{LauxOtto, LauxSwartz}, Brakke~\cite{LauxOttoBrakke}, and De~Giorgi~\cite{LauxOttoDeGiorgi,LauxLelmiDeGiorgi,KraemerDeGiorgi} solutions. 
All these works assume the convergence of the energy in the limit of vanishing time-step size, which also in the case of the MBO scheme can only be shown for smooth flows~\cite{SwartzYip} or mean convex singularities~\cite{FuchsLaux}.
However, also for this approximation, an unconditional, global convergence result towards a varifold solution is still missing. 
In summary, while minimizing movements schemes are the most natural time discretizations of gradient flows and are well-studied in the case of the Mean Curvature Flow, prior to our work, no approximation for the degenerate $L^2$ distance has been found that allows for a global-in-time convergence proof towards any varifold solution. 

In the present work, we propose an alternative and more robust approximation of the $L^2$ geodesic distance. 
We regularize the $L^2$ metric with an additional Mullins-Sekerka-type nonlocal regularization and obtain a non-degenerate distance on sets. 
Loosely speaking, the metric tensor $\int_{\partial A} V^2 \, \d S$ expressed in terms of the indicator function $\xhi=\xhi_A$ is of the form $\int \big(\frac{\partial_t \chi}{|\nabla \chi|}\big)^2 |\nabla \chi|$, where the quotient in the square denotes the Radon-Nikod\'ym derivative. We approximate the (degenerate) geodesic distance by the proxy
\begin{align*}
d_{\alpha,h}^2(A,B)=\int \Big| \Big(|\rho_h \ast \nabla \chi_A| + \alpha^2(I-\Delta)\Big)^{-\frac12} (\chi_B-\chi_A) \Big|^2\, \d x
\end{align*}
with $\alpha, h >0$. Here $\rho_h$ is a standard mollifier on scale $h>0$ that regularizes the distributional gradient $\nabla \xhi_A$, and the additional operator $\alpha^2(I-\Delta)$ is a nonlocal Mullins-Sekerka-type regularization.
For $\alpha=h=0$, we formally recover the $L^2$ distance.
This regularization permits us to construct a new minimizing movements scheme, with De~Giorgi's interpolant, allowing to show that the limit, i.e., the so-called flat flow, is a De Giorgi varifold solution to the (volume preserving) Mean Curvature Flow in the sense of \cite{LH,Poiatti}.
The proxy for the distance in~\cite{ATW} and~\cite{LS}, even to be written down, requires some regularity from one of its arguments. 
In our case, we obtain a well-defined (albeit, still asymmetric) ``distance'' that does not require regularity, and most importantly, allows us to use the general theory of gradient flows~\cite{AGS} as long as $\alpha>0$. We can take the regularization $h$ to be equal to the time-step size, but take $\alpha \gg h$. 
In fact, it is convenient to take the disjoint limit, first $h\to0$, \textcolor{black}{to exploit the fundamental Mullins-Sekerka-type regularization}, then $\alpha\to0$.
Our main result, Theorem~\ref{thm1}, then shows that any limit of the minimizing movements scheme is a De~Giorgi varifold solution to (volume preserving) Mean Curvature Flow.

This is the first proof that a minimizing movements scheme guarantees unconditional existence of weak solutions (in the sense of De Giorgi varifold solutions) to (volume preserving) Mean Curvature Flow. Furthermore, we show that, in the case of volume preserving Mean Curvature Flow, we do not need to use any penalization to ensure the volume constraint as in the conditional result \cite{MSS}, since we can directly work with prescribed volume minimizers in the approximating scheme as is done in the recent \textcolor{black}{(also conditional) result}~\cite{Julin_global}. 

Furthermore, the present work  provides an alternative proof of the existence of such De Giorgi varifold solutions. The existence arguments in \cite{LH,Poiatti} rely on the sharp interface limit of the classical Allen-Cahn equation from Ilmanen \cite{Ilmanen} and, for the volume preserving case, its adaptation by \cite{Takasao} to guarantee volume conservation in the limit.
This new minimizing movements scheme and the convergence proof presented here have the potential to be extended to various other situations; for example, we envision capillary problems, coupling with advection \textcolor{black}{(cf. \cite{AbelsP} for the advective Mullins-Sekerka case)}, and possibly multiphase partition problems. \textcolor{black}{Finally, we note that here we present our theorem in dimensions $d=2,3$, for which we can crucially exploit the results by Sch\"atzle \cite{Schatzle} for chemical potentials in $H^1$. Nevertheless, we are confident that substituting the Laplacian for instance with a $p$-Laplacian, like an operator of the form $-\Div((1+\normmm{\nabla \cdot}^{2})^{\frac{p-2}2}\nabla \cdot)$, with $p>d/2$, and making the necessary changes in the metric to account for a nonlinear operator (e.g., defining it by duality), should allow to obtain similar results for all dimensions, exploiting again \cite{Schatzle} with chemical potentials in $W^{1,p}$. This will be objective of future investigations.}

	\section{Main results}

    \label{Sec::mainresults}
	First we give here the admissibility conditions for a couple given by evolving phase indicators and varifolds.
	\begin{definition}[Admissible couples of evolving phase indicators and varifolds]\label{admissible}
	Let $d=2,3$, and set $\Ts\in(0,\infty)$. Let $\xhi\in L^\infty_{w*}(0,\Ts; BV({\mathbb T^d};\{0,1\}))$. Consider a family $\mu=(\mu_t)_{t\in(0,\Ts)}$ of weakly* measurable in time oriented varifolds $\mu_t\in \mathcal M({\mathbb T^d}\times \mathbb S^{d-1})$, for all $t\in(0,\Ts)$. The couple $(\chi,\mu)$ is said to be admissible if the following properties are satisfied: 
	\begin{enumerate}
	
		\item\textit{(Compatibility with the phase indicator).} 
        The oriented varifold $\mu_t$ is compatible with the interface associated  with the phases modeled by $\chi$, in the sense that for almost every $t\in(0,\Ts)$, for every $\eta\in C^{1}({\mathbb T^d};\mathbb R^d)$ 
      
		\begin{align}
			&\int_{{\mathbb T^d}\times \mathbb{S}^{d-1}}p\cdot \eta(x)\,\d\mu_t(x,p)=\int_{\mathbb T^d}  \eta(x)\cdot \d{\nabla\chi(x,t)}.\label{PE}
		\end{align}

\item\textit{(Measurability in time of the energy).} The total mass measure associated with the oriented varifold $\mu_t$, that is
\begin{align}
	E[\mu_t]:=\normmm{\mu_t}_{\mathbb S^{d-1}}({\om}),
	\label{PL}	
\end{align}
is a measurable map from $t\in(0,\Ts)$ to $E[\mu_t]\in[0,\infty)$.
\end{enumerate}
\label{admissibility}
\end{definition}
 In order to give our definition of weak solutions to (volume preserving) Mean Curvature Flow, we introduce the following preliminary definitions: given $(\xhi,\mu)$ admissible as in Definition \ref{admissible}, we set, for almost any $t>0$,
\begin{align}\label{Schi}
        S_{\xhi_t}:=\begin{cases}C^1({\mathbb T^d};\R^d),&\quad\text{for MCF},\\\{B\in C^1({\mathbb T^d};\R^d):\ \int_{\mathbb T^d} \Div B\ \xhi(t)\dx =0\},&\quad\text{for volume preserving MCF},
        \end{cases}
    \end{align}
    where we note that in the volume preserving case the definition of $S_{\xhi_t}$ depends on the characteristic function $\xhi(t)$. We then introduce the Hilbert space
    \begin{align}
        \mathcal V_{\xhi_t}:=
            \overline{S_{\xhi_t}}^{L^2({\mathbb T^d};\d \normmm{\mu_t}_{\mathbb S^{d-1}})},\label{Vchi}
    \end{align}
    endowed with the $L^2({\mathbb T^d};\d \normmm{\mu_t}_{\mathbb S^{d-1}})$ inner product.
   We can now give our definition of weak solution, which is the same as \cite[Definition 1]{LH} for MCF, whereas it is slightly different  from the one for \textcolor{black}{volume preserving MCF} in \cite{Poiatti} (cf. Remark \ref{meancurvature}).
	\begin{definition}
		[Varifold solutions to (volume preserving) Mean Curvature Flow]
		Let $d=2,3$, consider $\Ts\in(0,\infty)$, and let $\mathbb T^d$ be the $d$-dimensional flat torus. Fix also $\chi_0\in BV({\mathbb T^d};\{0,1\})$ with $\int_{\mathbb T^d} \xhi_0\dx\in (0,\mathcal L^d({\mathbb T^d}))$. 
		A measurable map $\xhi:{\mathbb T^d}\times(0,\Ts)\to \{0,1\}$, together with a family $\mu=(\mu)_{t\in(0,\Ts)}$ of oriented varifolds $\mu_t\in \mathcal M (\om\times\mathbb{S}^{d-1})$, $t\in(0,\Ts)$, is called a \textit{varifold solution }for the Mean Curvature Flow, with time horizon $\Ts$ and initial datum $\xhi_0$, if 
		\begin{enumerate}
			\item\textit{(Admissibility and regularity).} $(\xhi,\mu)$ is an admissible couple in the sense of Definition \ref{admissibility}.
	
		\item\textit{(Square integrable velocity).} There exists a velocity $V\in L^2(0,\Ts;L^2({\mathbb T^d},\d\normmm{\mu_{(\cdot)}}_{\mathbb S^{d-1}}))$, such that 
		\begin{align}
		&\int_{\om}\xhi(\cdot,T')\zeta(\cdot, T')\dx-\int_{\om}\xhi_0\zeta(\cdot,0)\dx\nonumber\\&
        =\int_0^{T'}\int_{\mathbb T^d} \xhi(\cdot,t)\pt \zeta(\cdot,t)\dxdt-\int_0^{T'}\int_{\om}V\zeta\d\normmm{\mu_t}_{\mathbb S^{d-1}}\dt,
			\label{kinetic}
		\end{align} 
		for all $\zeta\in C^\infty_c({\mathbb T^d}\times[0,\Ts])$, for almost any $T'\in(0,\Ts)$; 
		
		\item \textit{(Sharp energy dissipation inequalities).} The following energy inequality holds: 
		\begin{align}
	        E[\mu_{T'}]+\onehalf\int_s^{T'}\int_{\mathbb T^d}\normmm{\ddt \xhi}^2\d \tmmu\dt +\onehalf\int_s^{T'} \normmm{\partial E(\mu_t)}^2_{\mathcal V_{\xhi_t}}\dt&\leq E[\mu_s],
			\label{advectiveMullins}	
		\end{align}
		 for almost any $T'\in(0,\Ts)$ and almost any $s\in[0,T')$, including $s=0$, where
		 $$
		 E[\mu_t]:=\begin{cases}
		 	\tmmu(\om),& t>0,\\
		 	E[\xhi_0],& t=0,
		 	\end{cases}
		 $$
		 with $E[\xhi_0]=\normmm{\nabla \chi_0}({\mathbb T^d})$,  
          \begin{align}
   \frac12 \int_s^{T'}\normmm{\partial E(\mu_t)}^2_{\mathcal V_{\xhi_t}} :=\sup_{B\in C^1_c((s,T');\mathcal S_{\xhi_{(\cdot)}})}\left(\int_s^{T'}\delta \mu_t(B(t))\dt-\frac12\int_s^{T'}\norm{B(t)}_{\mathcal V_{\xhi_{t}}}^2\dt\right),\label{slope}
\end{align}
\textcolor{black}{and we identified, thanks to \eqref{kinetic}, $\ddt \xhi$ with $V$.}
		\end{enumerate}	 
		\label{weaksol}   
	\end{definition}
    \begin{remark}[Existence of a generalized mean curvature vector]\label{meancurvature}
        Compared to the notions in \cite{LH,Poiatti}, here we adopted a \textcolor{black}{more formal} gradient flow perspective in the energy inequality \eqref{advectiveMullins}. This allows us to obtain the existence of a generalized mean curvature vector directly as a consequence of \eqref{advectiveMullins}. Indeed, in the case of MCF, since $\onehalf\int_0^{T^*}  \normmm{\partial E(\mu_t)}^2_{\mathcal V_{\xhi_t}}\dt\leq C<\infty$, from \eqref{slope} we immediately infer (after uniquely extending by density the first variation $\delta\mu_t$ on $L^2(0,\Ts;L^2({\mathbb T^d}; \d\tmmu;\R^d))$) that there exists a vector field $\mathbf H_{\normmm{\mu}_{\mathbb S^{d-1}}}\in L^2(0,\Ts;L^2({\mathbb T^d}; \d\normmm{\mu_{(\cdot)}}_{\sf};\R^d))$ satisfying, for almost any $t\in(0,\Ts)$, 

	\begin{align}
		\int_0^\Ts\delta\mu_t(B)=-\int_0^\Ts\int_\om \mathbf H_{\tmmu}\cdot B\,\d\tmmu,
		\label{PH}
	\end{align}
    where $\delta\mu_t$ is the first variation of $\mu_t$ in the direction of a vector field $B\in C_c^1((0,\Ts)\times\om; \mathbb R^d)$. Also, it holds 
$$
 \frac12 \int_s^{T'}\normmm{\partial E(\mu_t)}^2_{\mathcal V_{\xhi_t}}\dt=\frac12 \int_s^{T'}\int_{{\mathbb T^d}} \normmm{\mathbf H_{\normmm{\mu_t}_{\mathbb S^{d-1}}}}^2\d\tmmu\dt,\quad \forall s,T'\in(0,\Ts),
$$
so that the energy inequality is equivalent to the one stated in \cite[Definition 1]{LH}. On the other hand, in the volume preserving case, the definition of $\mathcal V_{\xhi_t}$ and \eqref{slope} gives the existence of a vector field $\mathbf H^0_{\normmm{\mu}_{\mathbb S^{d-1}}}\in L^2(0,\Ts;\mathcal V_{\xhi(\cdot)})$, satisfying \eqref{PH} for any $B\in C^1_c((0,\Ts);S_{\xhi_{(\cdot)}})$ and for almost any $t\in(0,\Ts)$, and such that 
$$
\frac12 \int_s^{T'} \normmm{\partial E(\mu_t)}^2_{\mathcal V_{\xhi_t}}\dt=\frac12 \int_s^{T'}\int_{{\mathbb T^d}} \normmm{\mathbf H^0_{\normmm{\mu_t}_{\mathbb S^{d-1}}}}^2\d{\normmm{\mu_t}_{\sf}}\dt,\quad\forall s,T'\in(0,\Ts),
$$
which is in general different from the corresponding term in the energy inequality in \cite[(3.3d)]{Poiatti}. The main difference is the fact that $\mathbf H^0_{\normmm{\mu}_{\mathbb S^{d-1}}}$ enjoys a further interesting generalized zero integral average property, namely
\begin{align}
   \int_{{\mathbb T^d}\times \mathbb S^{d-1}} \mathbf H^0_{\normmm{\mu_t}_{\mathbb S^{d-1}}}\cdot p\ \d\mu_t=0,\quad\text{ for a.a. t}\in(0,\Ts)\label{zero}.
\end{align}
Indeed, since $\mathbf H^0_{\normmm{\mu_t}_{\mathbb S^{d-1}}}\in \mathcal V_{\xhi_t}$, there exists a sequence $\{B_k\}_k\subset \mathcal S_{\xhi_t}$ such that $B_k\to \mathbf H^0_{\normmm{\mu_t}_{\mathbb S^{d-1}}}$ in $L^2({\mathbb T^d};\d \normmm{\mu_t}_{\sf})$, and thus we can write, by the Gau{\ss} Theorem and the compatibility condition \eqref{PE},
\begin{align*}
   0= \int_{\mathbb T^d} \Div B_k\ \xhi_t\dx =-\int_{\mathbb T^d} B_k\cdot \bn_t\ \d\normmm{\nabla \xhi_t}=-\int_{{\mathbb T^d}\times \mathbb S^{d-1}}B_k\cdot p\ \d\mu_t,
\end{align*}
giving \eqref{zero} by passing to the limit as $k\to\infty$. To conclude, thanks to Lemma \ref{curvature} in the appendix, one can find a vector field $\mathbf H_{\normmm{\mu_t}_{\mathbb S^{d-1}}}\in L^2(0,\Ts;L^2({\mathbb T^d};\d\normmm{\mu_{(\cdot)}}_{\mathbb S^{d-1}}))$ such that \eqref{PH} actually holds for any $B\in C_c^1((0,\infty)\times {\mathbb T^d};\R^d)$. Namely, Lemma \ref{curvature}, with $m_0:=\int_{\mathbb T^d} \xhi(t)\dx=\int_{\mathbb T^d} \xhi_0\dx \in (0,\mathcal L^d({\mathbb T^d}))$ for almost any $t\geq0$, gives the existence of $\lambda:(0,\Ts)\to \R$ such that
\begin{align}
    \nonumber\normmm{\lambda(t)}&\leq C(1+\normmm{\nabla\chi(t)}(\om))\left(\normmm{\mu_t}_{\mathbb S^{d-1}}(\om)+\sqrt{\normmm{\mu_t}_{\mathbb S^{d-1}}(\om)}\norm{\mathbf H^0_{\normmm{\mu_t}_{\mathbb S^{d-1}}}}_{L^2({\mathbb T^d};\d\normmm{\mu_t}_{\mathbb S^{d-1}})}\right)\\&\leq C\left(1+\norm{\mathbf H^0_{\normmm{\mu_t}_{\mathbb S^{d-1}}}}_{L^2({\mathbb T^d};\d\normmm{\mu_t
    }_{\mathbb S^{d-1}})}\right),\label{lambdat}
\end{align}
exploiting the energy inequality \eqref{advectiveMullins}, together with the compatibility $\normmm{\nabla \xhi(t)}\leq \normmm{\mu_t}_{\sf}$ for almost any $t>0$. This gives $\lambda\in L^2(0,\Ts)$. Then, disintegrating the varifold as $\mu_t=\normmm{\mu_t}_{\mathbb S^{d-1}}\otimes \left(\pi_x\right)_{x\in {\mathbb T^d}}$, with $\pi_x$ probability measure, \eqref{PH} holds if we set
\begin{align}
\label{full}
\mathbf H_{\normmm{\mu_t}_{\mathbb S^{d-1}}}(x):=\mathbf H^0_{\normmm{\mu_t}_{\mathbb S^{d-1}}}(x)+\lambda(t)\int_{\mathbb S^{d-1}}p\d\pi_x,\quad\text{for }\normmm{\mu_t}_{\mathbb S^{d-1}}\text{-a.a. }x\in {\mathbb T^d},
\end{align}
just using \eqref{totvar} and recalling the compatibility \eqref{PE}. 
\end{remark}
The first existence proofs of such weak solutions to (volume preserving) Mean Curvature Flow can be found in \cite{LH,Poiatti}, which are strongly based on the sharp interface limit of the classical Allen-Cahn equation studied in the fundamental work by Ilmanen \cite{Ilmanen} and, for the volume preserving case, its \textcolor{black}{generalization by Takasao} \cite{Takasao} to ensure the volume conservation in the limit. Here we propose an alternative self contained proof that is valid, up to small adaptations, for showing the existence of both the Mean Curvature Flow and the volume preserving Mean Curvature Flow. The argument is for the first time entirely based on a minimizing movements approach, using a suitable De Giorgi interpolant. Therefore no maximum principles are used, as it is the case for the proofs in \cite{LH,Poiatti}, where a crucial maximum principle is exploited in the study the sign of the so-called discrepancy measure, necessary if one adopts a diffuse-to-sharp interface approach.
We then state our main existence result.
\\ \ \\
\begin{theorem}[Varifold solutions to Mean Curvature Flow as limits of minimizing movements]
	Let $d \in \{2,3\}$, $\mathbb T^d$ be the $d$-dimensional flat torus. Let $\xhi_0 \in BV({\mathbb T^d};\{0,1\})$.
	Then, there exists a suitable minimizing movements scheme such that its limit (flat flow) is a varifold solution to the Mean Curvature Flow and to the volume preserving Mean Curvature Flow,  globally defined on $(0,\infty)$, with
	initial datum $\chi_0$, in the sense of Definition \ref{weaksol} for any $\Ts>0$.

	\label{thm1}
\end{theorem}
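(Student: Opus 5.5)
The plan is to build the scheme with the regularized metric $d_{\alpha,h}$ from the introduction and pass to the limit in two stages: first $h\to0$ at fixed $\alpha>0$, then $\alpha\to0$.

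\textbf{Discrete scheme.} For fixed $\alpha>0$ and time step $h>0$, I define $\chi^n$ recursively. Each step minimizes
$$E[\chi]+\frac{1}{2h}\int_{\mathbb T^d}\Big|\big(|\rho_h\ast\nabla\chi^{n-1}|+\alpha^2(I-\Delta)\big)^{-\frac12}(\chi-\chi^{n-1})\Big|^2\dx$$
over $BV({\mathbb T^d};\{0,1\})$. In the volume preserving case the minimization is restricted to $\int\chi=\int\chi_0$. Existence of minimizers follows from the direct method:
\begin{itemize}
\item the metric term is continuous under $L^1$ convergence, because the operator is a compact smoothing operator whose weight depends only on $\chi^{n-1}$;
\item the perimeter is lower semicontinuous;
\item the volume constraint passes to the limit under $L^1$ convergence.
\end{itemize}
For fixed $\alpha$ the quadratic form is comparable to an $H^{-1}$-type norm, since $\alpha^2(I-\Delta)$ dominates. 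This lets me run the abstract De Giorgi interpolation machinery of \cite{AGS} with the variational interpolant $\tilde\chi_h(t)$. It yields the discrete sharp energy inequality
$$E[\chi_h(T)]+\tfrac12\int_0^T\tfrac{d^2}{h^2}\dt+\tfrac12\int_0^T|\partial E|^2(\tilde\chi_h)\dt\le E[\chi_0].$$
I then compute the slope $|\partial E|(\tilde\chi_h)$ from the Euler--Lagrange equation, using inner variations $\chi\circ\Phi_s^{-1}$ along flows of $B$ (with $B$ in $S_{\chi}$ in the volume preserving case). The result is a chemical potential $w_h\in H^1$ solving
$$\big(|\rho_h\ast\nabla\chi|+\alpha^2(I-\Delta)\big)w_h=\tfrac{\chi-\chi^{n-1}}{h},\qquad \delta E(B)=\int w_h\,B\cdot\d\nabla\chi\ (+\lambda\text{-term}).$$
The slope then controls $\int w_h^2|\rho_h\ast\nabla\chi|+\alpha^2\|w_h\|_{H^1}^2$.

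\textbf{Limit $h\to0$ at fixed $\alpha$.} The energy bound gives $BV$ compactness in space. The $H^{-1}$-type control on the increments gives equicontinuity in time, so $\chi_h\to\chi_\alpha$ in $L^1$ by Aubin--Lions. The chemical potentials are bounded in $L^2_tH^1_x$. Varifold compactness produces $\mu^\alpha_t$ with $|\nabla\chi_\alpha|\le|\mu^\alpha_t|$. The key tool is Sch\"atzle's result \cite{Schatzle}, which applies for $d=2,3$ because $H^1\hookrightarrow L^p$ with $p>d-1$ is enough. It shows the limit varifold is integral with mean curvature $w\,\bn$, at least on $\partial^*\{\chi=1\}$, and lets us pass to the limit in the first variation. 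The outcome is a Mullins--Sekerka-type De Giorgi solution at level $\alpha$, whose dissipation functional is $\int|\partial_t\chi|^2/|\nabla\chi|$-like plus $\alpha^2\|w\|_{H^1}^2$. Lower semicontinuity of both dissipation terms should follow by writing them in dual form as suprema over test fields, as in \eqref{slope}.

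\textbf{Limit $\alpha\to0$.} The $\alpha$-uniform bounds are:
\begin{itemize}
\item the energy;
\item $\int_0^T\int V_\alpha^2\,\d|\mu^\alpha_t|$, which is identified via the kinetic relation \eqref{kinetic} with $V_\alpha=w_\alpha$ weighted against $|\mu^\alpha_t|$;
\item the slope $\int|\mathbf H|^2\d|\mu^\alpha_t|$, expressed through the duality formula \eqref{slope}.
\end{itemize}
Compactness gives $\chi_\alpha\to\chi$ and $\mu^\alpha\to\mu$ weakly*, with \eqref{PE} preserved. The velocity and slope terms pass to the limit by lower semicontinuity, since both are suprema of functionals linear in $(\chi,\mu)$ minus quadratic terms in $L^2(\d|\mu_t|)$. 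Pointwise energy convergence is handled as follows:
\begin{itemize}
\item the inequality for a.e.\ $s,T'$ comes from monotone energies and Helly's theorem;
\item $s=0$ is included via $E[\chi_0]$.
\end{itemize}
In the volume preserving case I pass $\lambda$ to the limit using Lemma~\ref{curvature} and the bound \eqref{lambdat}. A diagonal argument in $T^*$ gives a global solution.

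\textbf{Main obstacle.} The hardest step is the identification of the limit dissipation. One must show that the metric derivative squared, in the limit, dominates $\int V^2\d|\mu_t|$ with respect to the limit varifold mass rather than $|\nabla\chi|$. Equivalently, the slope must dominate the varifold first-variation norm even when mass is lost in the limit, $|\mu_t|>|\nabla\chi_t|$. I would attack this in two parts. For the velocity, I would use the duality $\int\partial_t\chi\,\zeta\le\frac12\int(\cdot)^{-1}|\partial_t\chi|^2+\frac12\int\zeta^2(|\rho_h\ast\nabla\chi|+\dots)$ and the inequality $\liminf|\rho_h\ast\nabla\chi_h|\le|\mu_t|$ as measures. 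For the slope, I would use the Euler--Lagrange representation together with Sch\"atzle's weak-to-varifold curvature convergence.
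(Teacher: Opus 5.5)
Your architecture matches the paper's: the frozen-weight scheme, the De Giorgi interpolant, Gibbs--Thomson via inner variations, the bound $|\rho_h\ast\nabla\xhi^h|\le\rho_h\ast|\nabla\xhi^h|$, Sch\"atzle for $d=2,3$, then $\alpha\to0$. However, the step for which Sch\"atzle is actually needed is missing.

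Because the operator is frozen at $\xhi^{n-1}$, the chemical potential of $\tilde\xhi_h(t)$ ($=\xhi^h_{DG}(t)$) is $w_h$ from \eqref{wh}. It is built with the weight $|\rho_h\ast\nabla\xhi^{n-1}|$, not the weight at the new set that your Euler--Lagrange formula suggests. The Gibbs--Thomson relation \eqref{allard} in which $w_h$ enters, on the other hand, is the first variation of the lift of $\tilde\xhi_h(t)$. So $\int w_h^2|\rho_h\ast\nabla\xhi^{n-1}|$ pairs the first variation of one set with the perimeter weight of another.

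As $h\to0$, that weight (the same one as in the velocity term) is controlled by $\rho_h\ast|\nabla\xhi^h|\to|\mu_t|$, the limit of the piecewise constant lifts. The first variation instead converges to $\delta\widetilde\mu_t$, the limit of the De Giorgi lifts. Now \eqref{identification} only gives equal total masses, not $\widetilde\mu_t=\mu_t$. Your duality argument therefore only yields
$\liminf_{h\to0}\frac12\int\!\!\int w_h^2|\rho_h\ast\nabla\xhi^h|\ge\sup_\xi\big(\int\delta\widetilde\mu_t(\xi)\,\d t-\frac12\int\!\!\int|\xi|^2\,\d|\mu_t|\,\d t\big)$.
This is a hybrid quantity, not the slope of $\mu_t$ required in \eqref{advectiveMullins}.

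The paper closes this gap with \eqref{variation4}. By Sch\"atzle's theorem \cite{Schatzle}, through \cite[Lemmas 4.1--4.2]{Roger}, both first variations equal $-\int H_\xhi\frac{\nabla\xhi}{|\nabla\xhi|}\cdot B\,\d|\nabla\xhi_t|$, with $H_\xhi$ intrinsic to $\partial^*\{\xhi_t=1\}$. This identification is what forces $d\le 3$ (potentials in $W^{1,p}$, $p>d/2$) and the order $h\to0$ before $\alpha\to0$. Merely passing to the limit in the first variation needs no Sch\"atzle: weak-$*$ varifold convergence, $w_h\rightharpoonup w$ in $L^2H^1$, and strong convergence of $\tilde\xhi_h$ suffice. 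Integrality of the limit varifold is neither needed nor obtained.

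There are three smaller points.

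\emph{Velocity at level $\alpha$.} The velocity is not $w_\alpha$, and \eqref{kinetic} does not hold at level $\alpha$. One only has $\partial_t\xhi_\alpha=\widehat V_\alpha|\mu^\alpha_t|$ plus elliptic $\alpha$-terms, as in \eqref{base11}. No chain rule identifies the velocity with the chemical potential, and the uniform bound on $\int\!\!\int\widehat V_\alpha^2\,\d|\mu^\alpha_t|$ comes from the velocity term alone.

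\emph{Your dual formulation works here.} Use $\frac12\|\mathcal A^{-1/2}\partial_t\wchi^h\|^2\ge\langle\partial_t\wchi^h,\zeta\rangle-\frac12\langle\mathcal A\zeta,\zeta\rangle$ together with $|\rho_h\ast\nabla\xhi^h|\le\rho_h\ast|\nabla\xhi^h|$. This gives, with the sharp factor $\frac12$, the dual norm of $\partial_t\xhi$ for the test norm $\int\!\!\int\zeta^2\,\d|\mu_t|\,\d t+\alpha^2\|\zeta\|^2_{L^2H^1}$. The same supremum passes to $\alpha\to0$ for each fixed $\zeta$, and Riesz representation in $L^2(\d|\mu_t|\,\d t)$ then produces $V$ and \eqref{kinetic}. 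This is a cleaner route than the paper's space $W$, which uses weight $\alpha$, incurs the $(1+4\alpha^{1/2})$ loss, and needs Hutchinson's theorem.

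\emph{Increment bound.} The control on increments is not $H^{-1}$-type uniformly in $h$, since $|\rho_h\ast\nabla\xhi|$ is unbounded as $h\to0$. Lemma \ref{Lemmaequiv} gives the uniform $(L^\infty\cap H^1)'$ bound, which suffices for Aubin--Lions--Simon.
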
  
\ \\ \ 
\begin{remark}
\label{diagonal}
    As it will be seen in the proof, the minimizing movements scheme admittedly relies on two parameters, say the step size $h>0$ and the parameter $\alpha>0$ of the Mullins-Sekerka-type approximation. The order of the limits is first as $h\to0$, so that we can exploit the regularization properties of Mullins-Sekerka, namely the fundamental results by \cite{Schatzle}, and finally as $\alpha\to 0$. Of course, with suitable diagonalization procedures, one can extract a subsequence of parameters for which the minimizing movements solution converges to a De Giorgi varifold solution in a single passage to the limit. This shows that there exists a flat flow solution from a suitable minimizing movements scheme which is also a De Giorgi varifold solution.  
\end{remark}
\begin{remark}[On the generalized mean curvature vector]
Remark \ref{meancurvature} holds for any $\Ts>0$. Namely, in the case of Mean Curvature Flow, we directly obtain the existence of a generalized mean curvature vector such that $\mathbf H_{\normmm{\mu}_{\mathbb S^{d-1}}}\in L^2(0,\infty;L^2({\mathbb T^d}; \d\normmm{\mu_{(\cdot)}}_{\sf};\R^d))$ and \eqref{PH} holds with $\Ts=\infty$. On the other hand, for the volume preserving Mean Curvature Flow we deduce that $\mathbf H_{\normmm{\mu}_{\mathbb S^{d-1}}}^0\in L^2(0,\infty;L^2({\mathbb T^d}; \d\tmmu;\R^d))$ and, recalling \eqref{lambdat}-\eqref{full}, the generalized mean curvature vector, satisfying \eqref{PH} with $\Ts=\infty$, is such that $\mathbf H_{\normmm{\mu}_{\mathbb S^{d-1}}}\in L^2_{loc}((0,\infty);L^2({\mathbb T^d}; \d\tmmu;\R^d))$. 
\end{remark}
  \begin{remark}[On the rectifiability of the varifolds]
   Since, from Remark \ref{meancurvature}, there exists for both types of flows a generalized mean curvature vector $\mathbf{H}_{\normmm{\mu}_{\mathbb S^{d-1}}} \in L^2_{loc}((0,\infty);L^2({\mathbb T^d};d\normmm{\mu_{(\cdot)}}_{\mathbb S^{d-1}};\R^d))$, it is immediate to infer that  $\int_{\mathbb T^d} \normmm{\bH_{\normmm{\mu_t}_{\mathbb S^{d-1}}} }^2\d\normmm{\mu_t}_{\mathbb S^{d-1}}<+\infty$ for almost any $t>0$. Therefore, Allard's rectifiability criterion \cite{Allard} allows to conclude that the unoriented varifold $\widehat\mu_t$, corresponding to $\mu_t$  (identifying antipodal points $\pm p$ on $\mathbb S^{d-1}$), is $(d-1)$-rectifiable for almost any $t\in(0,\infty)$. On the other hand, in general we do not know anything about the integer rectifiability of the varifold, which is apparently nontrivial to be obtained passing through a minimizing movements scheme. So far, integrality of the varifolds can be obtained if we construct De Giorgi solutions through a sharp interface limit procedure (cf. \cite{LH,Poiatti}).  
\end{remark}
  \begin{remark}[Weak-strong uniqueness and consistency principles for the flat flow]
      Thanks to Remark \ref{meancurvature}, the notions of weak solutions for (volume preserving) MCF  can be reduced to the ones in \cite{ATW,Poiatti}. Therefore a weak-strong uniqueness principle holds, see \cite[Theorem 2]{LH} and \cite[Theorem 3.12]{Poiatti}. Crucially, both the results do not rely on the integer rectifiability of the varifold, thus they are perfectly applicable also in this case, showing a fundamental consistency principle of this notion of solution. This has the fundamental consequence that, similarly to \cite{ATW,Julin}, if there exists a smooth solution on $[0,\Ts]$ for some $\Ts>0$, then by the weak-strong uniqueness principle above \textit{any} flat flow solution found in Theorem \ref{thm1} is indeed smooth. In this special case, recalling Remark \ref{diagonal}, we can even find a unique sequence, choosing, e.g., $\alpha=\frac1n$ and a suitable $h=h(n)$, so that the corresponding flat flow solution is smooth.
  \end{remark}      
     
\section{Proof of Theorem \ref{thm1}} 
We aim at showing that there exists a minimizing movements approach, with a substantially different scheme from the one  in \cite{LS, ATW} for MCF and in \cite{MSS} for volume preserving MCF, such that its corresponding flat flow solution (i.e., the limit as the approximating parameters vanish) is a De Giorgi varifold solution to (volume preserving) Mean Curvature Flow.
\label{proofthm1}
	\subsection{First discretized step}\label{firstminmovement}
	Set $h>0$ to be the time-step size. For $s\geq0$, we introduce 
	$$T_h(s):=s-(n-1)h,\quad s\in((n-1)h,nh],$$
	where $n=\ceil{\tfrac sh}$, so that $T_h((mh)^-):=\lim_{s\to (mh)^-}T_h(s)=h$, for any $m\in\N$, and $T_h(t)=t$ for $t\in(0,h]$. Let us also consider the initial datum $\xhi_0\in BV({\mathbb T^d};\{0,1\})$ and define the set of competitors for the minimization problems, which depends on the fact that we are considering MCF or volume preserving MCF: 
\begin{align*}
    \mathcal M:=\begin{cases}
        BV({\mathbb T^d};\{0,1\}),&\quad \text{for MCF,}\\
        \{\xhi\in BV({\mathbb T^d};\{0,1\}):\ \int_{\mathbb T^d} \xhi \dx=\int_{\mathbb T^d} \xhi_0\dx\},&\quad\text{for volume preserving MCF}.
    \end{cases}
\end{align*}
	\subsubsection{minimizing movements step}
    Let us consider a standard symmetric mollifier such that 
\begin{align*}
    \rho\in C^\infty_c(B_1(0)),\quad \int_{\mathbb T^d} \rho(x)\dx=1,\quad \rho\geq 0,\quad \rho(x)=\rho(\normmm{x}),
\end{align*}
    and define 
    \begin{align*}
        \rho_h(x):=h^{-d}\rho\left(\frac{x}h\right).
    \end{align*}
    Then we consider the following convolution, given $\xhi\in BV({\mathbb T^d};\{0,1\})$:
    $$
    \rho_h\ast \xhi(x):=\int_{\mathbb T^d} \rho_h(x-y)\xhi(y)\d y,\quad \rho_h\ast\xhi\in C^\infty({\mathbb T^d}),
    $$
so that (cf. \cite[Proposition 12.20]{Maggi}),
\begin{align*}
    \rho_h\ast \nabla \xhi=\nabla (\rho_h\ast \xhi)\in C^\infty({\mathbb T^d}).
\end{align*}
Since we are working on the torus $\mathbb T^d$, we can consider the Laplace operator $-\Delta$, so that the operator $I-\Delta$, with $I$ identity operator, is invertible from $L^2({\mathbb T^d})$ onto $H^2({\mathbb T^d})$. Let us now fix $\alpha>0$ and consider $\xhi\in BV({\mathbb T^d};\{0,1\})$. Then the operator 
\begin{align*}
   \mathcal A:= \normmm{\rho_h\ast \nabla \xhi}+\alpha^2(I-\Delta)
\end{align*}
is invertible from $L^2({\mathbb T^d})$ onto $H^2({\mathbb T^d})$. Indeed, it is immediate to see that the bilinear form $a:H^1({\mathbb T^d})\times H^1({\mathbb T^d})\to H^1({\mathbb T^d})'$ defined by $a(u,v)=\langle\mathcal A u,v\rangle$ is coercive, since
$$
a(u,u)\geq \alpha^2\left(\norm{u}^2_{L^2({\mathbb T^d})}+\norm{\nabla u}^2_{\mathbf L^2({\mathbb T^d})}\right)+\int_{\mathbb T^d} \normmm{\rho_h\ast \nabla \chi}\normmm{u}^2\dx\geq \alpha^2\norm{u}_{H^1({\mathbb T^d})}^2,
$$
and thus by the Lax-Milgram Lemma the operator $\mathcal A$ is invertible from $H^1({\mathbb T^d})'$ to $H^1({\mathbb T^d})$. Then, since the lower order perturbation $\normmm{\rho_h\ast \nabla \chi}$ is Lipschitz continuous, we immediately infer by elliptic regularity that the unbounded operator $\mathcal A: H^2({\mathbb T^d})\hookrightarrow\hookrightarrow L^2({\mathbb T^d})\to L^2({\mathbb T^d})$ is invertible on $L^2({\mathbb T^d})$ with compact inverse.  As a consequence, by the spectral theorem for strictly positive operators we can define the real powers of $\mathcal A$, namely $\mathcal A^s$ for any $s\in\R$. Therefore, the operator $\mathcal A^{-\frac12}=\left(\normmm{\rho_h\ast{\nabla \xhi}}+\alpha^2(I-\Delta)\right)^{-\frac12}: H^1({\mathbb T^d})'\to H^1({\mathbb T^d})$ is well defined and continuous. 

We are now ready to introduce the following minimization problem, for $h>0$ and $\alpha>0$ ,
	\begin{align}
		&\chi^h_{0}=\chi_0,\\
		&\chi_{1}^{h}\in\arg\min_{\chi\in \mathcal M}\left(E[\chi]+\frac 1{2h}\normh{\left(\normmm{\rho_h\ast{\nabla \xhi_0^h}}+\alpha^2(I-\Delta)\right)^{-\frac12}(\chi-\chi_{0}^{h})}^2\right),\label{M1}
	\end{align}
	with
	$$
	E[\xhi]=\normmm{\nabla \xhi}({\mathbb T^d}).
	$$
	We further need the De Giorgi interpolation, as follows:
	\begin{align}
		& \xhi^{h}_{DG}(0):=\xhi_{0}^h,\\&
		\xhi_{DG}^h(t)\in\arg\min_{\chi\in \mathcal M}\left(E[\chi]+\frac 1{2t}\normh{\left(\normmm{\rho_h\ast{\nabla \xhi_0^h}}+\alpha^2(I-\Delta)\right)^{-\frac12}(\chi- \xhi^{h}_{0})}^2\right),
		\text{ for }t\in (0,h].\label{M2}
	\end{align}
	For any fixed $t$, the problem clearly admits at least one solution by the Direct Method of the Calculus of Variations. The solution can be chosen to be strongly measurable with values in $L^p({\mathbb T^d})$, for any $p>1$. This can be shown in a similar way as in \cite[Appendix B]{AbelsP}.

Let us now define the function 
 \begin{align}
 	f(t):=E[\xhi_{DG}^h(t)]+\frac{1}{2t}\normh{\left(\normmm{\rho_h\ast{\nabla \xhi_0^h}}+\alpha^2(I-\Delta)\right)^{-\frac12}(\chi_{DG}^h(t)- \xhi^{h}_{0})}^2,
 	\label{ftt}
 \end{align}
 where $\xhi_{DG}^h(t)$ is defined in \eqref{M2}.
 By standard arguments from~\cite{AGS} (see, e.g., \cite{SH,AbelsP}) it can be shown that $f(t)$ is locally Lipschitz continuous on $(0,h]$ and, for almost any $t\in(0,h]$,
 \begin{align}
 \ddt f(t)&=-\frac{1}{2t^2}\normh{\left(\normmm{\rho_h\ast{\nabla \xhi_0^h}}+\alpha^2(I-\Delta)\right)^{-\frac12}(\chi_{DG}^h(t)- \xhi^{h}_{0})}^2.\label{derivative}
 \end{align}

\subsection{Iterative scheme at step $n>1$}
Let us assume to know $\xhi_{DG}^h(t)$ up to step $n-1$, i.e., we assume $\xhi_{DG}^h(t)$ to be defined on $[0,(n-1)h]$, $n\in\N$, $n>1$. Let also $\xhi^h_{m-1}$ be given for any $0\leq m\leq n$. We can now define the minimizing step $n$. 
\subsubsection{Minimizing movements scheme}
We set the following minimizing sequence:
\begin{align}
	&\chi_{n}^{h}\in\arg\min_{\chi\in \mathcal M}\left(E[\chi]+\frac 1{2h}\normh{\left(\normmm{\rho_h\ast{\nabla \xhi_{n-1}^h}}+\alpha^2(I-\Delta)\right)^{-\frac12}(\chi- \xhi^{h}_{n-1})}^2\right),\label{M1b}
\end{align}

Then, we define the De Giorgi interpolant as:
\begin{align}
	& \xhi^h_{DG}((n-1)h):=\xhi_{n-1}^h,\\&
	\xhi_{DG}^h(t)\in\arg\min_{\chi\in \mathcal M}\left(E[\chi]+\frac 1{2(t-(n-1)h)}\normh{\left(\normmm{\rho_h\ast{\nabla \xhi_{n-1}^h}}+\alpha^2(I-\Delta)\right)^{-\frac12}(\chi- \xhi^{h}_{n-1})}^2\right),\nonumber\\&
	\text{ for }t\in ((n-1)h,nh].\label{M2b}
\end{align}
As already observed, for any fixed $t$, the problem admits at least one solution by the Direct Method of the Calculus of Variations. Observe that \eqref{derivative} is still valid up to substituting $\xhi_0$ with $\xhi_{m}^h$, for all $1\leq m\leq n-1$. We then define, for any $1\leq m\leq n$, 
\begin{align}
&\label{uh1}u_h^m:=-\left(\normmm{\rho_h\ast{\nabla \xhi_{m-1}^h}}+\alpha^2(I-\Delta)\right)^{-1}\frac1{h}\left(\xhi_m^{h}-\xhi_{m-1}^h\right),\\&
w_h(t):=-\left(\normmm{\rho_h\ast{\nabla \xhi_{m-1}^h}}+\alpha^2(I-\Delta)\right)^{-1}\frac{1}{\Th{t}}\left(\chi_{DG}^{h}(t)-\xhi_{m-1}^h\right),\quad t\in((m-1)h,mh),\label{wh}
\end{align}
which are well defined quantities thanks to the discussion above on the invertibility of the operator $\mathcal A$.

We also introduce the piecewise constant interpolants as 
\begin{align*}
	\xhi^h(t):=\xhi_{m-1}^h,\quad  u_h(t):=u_h^{m-1}\quad t\in[(m-1)h,mh),\quad 1\leq m\leq n,
\end{align*}
together with the affine interpolant

\begin{align*}
\widehat\xhi^h(t):=\frac{t-(m-1)h}{h}\xhi_m^h+\frac{mh-t}{h}\chi^h_{m-1},\quad t\in[(m-1)h,mh], \quad 1\leq m\leq n.
\end{align*}
Notice that, while $\xhi^h(t)$ and $\chi_{DG}^{h}(t)$ take values in $\{0,1\}$, the piecewise affine interpolant $\widehat \chi^h(t)$ takes values in the entire interval $[0,1]$.
\subsection{Global energy estimates}
We now iterate the procedure of the previous section for any $n\in\mathbb N$. Recall that on any interval of the form $((n-1)h,nh)$, $n\in\N$, we have that \eqref{derivative}  holds, namely we have 
\begin{align}
	&	 E[\xhi_{n}^{h}]+\frac1{2h}\normh{\left(\normmm{\rho_h\ast{\nabla \xhi^{h}_{n-1}}}+\alpha^2(I-\Delta)\right)^{-\frac12}(\chi_n^h- \xhi^{h}_{n-1})}^2\nonumber\\&\label{energy1a}+\int_{(n-1)h}^{nh}\frac{1}{2{T_h(\tau)}^2}\normh{\left(\normmm{\rho_h\ast{\nabla\xhi^{h}_{n-1}}}+\alpha^2(I-\Delta)\right)^{-\frac12}(\chi_{DG}^{h}(\tau)-\xhi^{h}_{n-1})}^2\d \tau\\& \nonumber\leq E[\xhi_{n-1}^h].
\end{align}
We then introduce the function
\begin{align*}
	g_h(t):=t-\floor{\frac{t}h}h,
\end{align*}
so that, from \eqref{energy1a}, which is valid for any $n\in\mathbb N$, we immediately infer, by a telescoping argument, that, for any $n,m\in \mathbb N$, $m<n$, 
\begin{align}
	\nonumber	&E[\xhi_{n}^{h}]+\frac1{2}\int_{mh}^{nh}\normh{	\left(\normmm{\rho_h\ast{\nabla \xhi^h(\tau)}}+\alpha^2(I-\Delta)\right)^{-\frac12}\partial_t{\widehat\xhi}_h(\tau)}^2\d\tau\nonumber\\&+\frac12\int_{mh}^{nh}\frac{1}{{g_h(\tau)}^2}\normh{\left(\normmm{\rho_h\ast{\nabla \xhi^h(\tau)}}+\alpha^2(I-\Delta)\right)^{-\frac12}(\chi_{DG}^{h}(\tau)-\xhi^h(\tau))}^2\d \tau\nonumber\\& \leq E[\xhi_{m}^h].
	\label{energy1ab1}
\end{align}
Observe that the same inequality can be rewritten as 
\begin{align}
	\nonumber	&E[\xhi_{n}^{h}]+\frac1{2}\int_{mh}^{nh}\normh{	\left(\normmm{\rho_h\ast{\nabla \xhi^h(\tau)}}+\alpha^2(I-\Delta)\right)^{\frac12}u_h(\tau)}^2\d\tau\nonumber\nonumber\\&+\int_{mh}^{nh}\normh{\left(\normmm{\rho_h\ast{\nabla \xhi^h(\tau)}}+\alpha^2(I-\Delta)\right)^{\frac12}w_h(\tau)}^2\d \tau\nonumber\\& \leq E[\xhi_{m}^h],
	\label{energy1ab}
\end{align}
where it holds
\begin{align}
    \nonumber\normh{\left(\normmm{\rho_h\ast{\nabla \xhi^h(\tau)}}+\alpha^2(I-\Delta)\right)^{\frac12}f}^2&=\int_{\mathbb T^d} f^2\normmm{\rho_h\ast \nabla\xhi^h(\tau)}\dx +\alpha^2(\normh{f}^2+\normh{\nabla f}^2)
    \\&
    \geq \alpha^2\norm{f}_{H^1({\mathbb T^d})}^2.\label{contr1}
\end{align}
By following, for instance, \cite[Proof of Theorem 1, Step 1]{SH} (see also \cite{AGS}), we can easily verify that
\begin{align}
t\mapsto E[\xhi_{DG}^h(t)]\quad\text{ is monotone nonincreasing on } (0,\infty).
    \label{monotonicity}
\end{align}
Also, from \eqref{energy1ab} it is immediate to infer that
\begin{align}
t\mapsto E[\xhi^h(t)]\quad\text{ is monotone nonincreasing on }(0,\infty).
    \label{monotonicity2}
\end{align}

Let us now fix $T^*>0$, and $0<s<\kappa<\tau<T<T^*$. We can assume w.l.o.g. $h<\min\{T-\tau,\kappa-s\}$, so that we can set $n_0:=\floor{\frac{T}h}$ and $m_0:=\ceil{\frac sh}$, entailing
$$
0<s\leq m_0h<\kappa<\tau<n_0h\leq T,\quad T\in[n_0h,(n_0+1)h),\quad s\in((m_0-1)h,m_0h].
$$
Then we find the two inequalities 
\begin{align}
	\nonumber	&E[\xhi_{DG}^h({\tau)}]+\frac1{2}\int_{0}^{\tau}\normh{	\left(\normmm{\rho_h\ast{\nabla \xhi^h(t)}}+\alpha^2(I-\Delta)\right)^{-\frac12}\partial_t{\widehat\xhi}^h(t)}^2\dt\\&\nonumber+\int_{0}^{\tau}\frac{1}{2{g_h(t)}^2}\normh{\left(\normmm{\rho_h\ast{\nabla \xhi^h(t)}}+\alpha^2(I-\Delta)\right)^{-\frac12}(\chi_{DG}^{h}(t)-\xhi^h(t))}^2\d t\\& \leq E[\xhi_0]
	\label{energy1abc1}
\end{align}
as well as
\begin{align}
	\nonumber	&E[\chi_{DG}^{h}(T)]+\frac1{2}\int_{\kappa}^{\tau}\normh{	\left(\normmm{\rho_h\ast{\nabla \xhi^h(t)}}+\alpha^2(I-\Delta)\right)^{-\frac12}\partial_t{\widehat\xhi}^h(t)}^2\dt\\&\nonumber+\int_{\kappa}^{\tau}\frac{1}{2{g_h(t)}^2}\normh{\left(\normmm{\rho_h\ast{\nabla \xhi^h(t)}}+\alpha^2(I-\Delta)\right)^{-\frac12}(\chi_{DG}^{h}(t)-\xhi^h(t))}^2\d t\\& \leq E[\chi_{DG}^{h}(s)]\leq E[\xhi_0].
	\label{energy1abc}
\end{align}
Additionally, notice that by \eqref{monotonicity} we infer, for any $t\geq0$,
\begin{align}
E[\xhi_{DG}^h(t)]\leq E[\xhi^h(t)]\leq E[\chi_{DG}^{h}(s)], \quad \forall s <t-h,
    \label{fundamental}
\end{align}
where we have set $\xhi_{DG}^h(s)=\xhi_0$ for $s<0$.

Also, we observe here that, by Lemma \ref{Lemmaequiv}, we have a control on the norm
\begin{align}
\nonumber&\norm{f}_{(L^\infty({\mathbb T^d})\cap H^1({\mathbb T^d}))'}\leq \sqrt 3\sqrt{\max\{\normmm{\nabla \xhi^h(t)}({\mathbb T^d}),\alpha^2\}}\normh{\left(\normmm{\rho_h\ast {\nabla\xhi^h(t)}}+\alpha^2(I-\Delta)\right)^{-\frac12}f}\\&
\leq \sqrt 3\sqrt{\max\{\normmm{\nabla \xhi_0}({\mathbb T^d}),\alpha^2\}}\normh{\left(\normmm{\rho_h\ast {\nabla\xhi^h(t)}}+\alpha^2(I-\Delta)\right)^{-\frac12}f},
\label{equiv}
\end{align}
where we used \eqref{energy1abc} and \eqref{fundamental}.
Following \cite{SH}, we introduce the varifolds associated to the varifold lift of the De Giorgi interpolant $\xhi^{h}_{DG}(t)$ in ${\mathbb T^d}$ as 
\begin{align*}
	\widetilde\mu_t^{h}:=\normmm{\nabla\xhi^{h}_{DG}(t)}\otimes \left(\delta_{\frac{\nabla \xhi^{h}_{DG}(t)}{\normmm{\nabla\xhi^{h}_{DG}(t)}}(x)}\right)_{x\in {\mathbb T^d}}\in \mathcal M({{\mathbb T^d}}\times \mathbb S^{d-1}),
\end{align*}
 so that for any $t\in(0,T^*)$ it holds $E[\xhi_{DG}^h(t)]=\normmm{\mu_t^h}_{\mathbb S^{d-1}}({\mathbb T^d})$.

 Analogously, we introduce the varifold lift of $\xhi^{h}(t)$ in ${\mathbb T^d}$ as 
\begin{align*}
	\mu_t^h:=\normmm{\nabla\xhi^{h}(t)}\otimes \left(\delta_{\frac{\nabla \xhi^{h}(t)}{\normmm{\nabla\xhi^{h}(t)}}(x)}\right)_{x\in {\mathbb T^d}}\in \mathcal M({{\mathbb T^d}}\times \mathbb S^{d-1}),
\end{align*}

so that for any $t\in(0,T^*)$ it holds $E[\xhi^h(t)]=\normmm{\mu_t^h}_{\mathbb S^{d-1}}({\mathbb T^d})$.

\subsection{Uniform estimates in $h$}

 We thus obtain all the desired uniform estimates from \eqref{energy1abc1}:
 \begin{align}
 	\esssup_{t \geq0}E[\xhi_{DG}^h(t)]\leq E[\xhi_0],
 	\label{dissiprel}
 \end{align}
 and also, recalling the controls \eqref{contr1} and \eqref{equiv},
 \begin{align}
 	&	\alpha\norm{u_h}_{L^2(0,T;H^1({\mathbb T^d}))}+\alpha\norm{w_h}_{L^2(0,T;H^1({\mathbb T^d}))}+\norm{\partial_t{\widehat\xhi}^h}_{L^2(0,T;(L^\infty(\om)\cap H^1({\mathbb T^d}))')}\leq C(E[\xhi_0]),\label{regvbis}
 \end{align}
 for any $T<\infty$. Additionally, we also have 
\begin{align}
    \int_0^T\int_{\mathbb T^d} u_h(t)^2\normmm{\rho_h\ast{\nabla \xhi^h(t)}}\dx\dt+\int_0^T\int_{\mathbb T^d} w_h(t)^2\normmm{\rho_h\ast{\nabla \xhi^h(t)}}\dx\dt\leq E[\xhi_0].\label{hutchinson}
\end{align}
Note that $u_h$ satisfies, for any $\zeta\in C_c^\infty([0,\infty)\times \om)$,
\begin{align}
&\nonumber\int_0^{\infty}\int_{\mathbb T^d}\partial_t\widehat\xhi^h(t)\zeta(t)\dx\dt\label{timederiv}\\&\nonumber=
-\int_0^\infty\int_{\mathbb T^d} u_h(t)\zeta(t)\normmm{\rho_h\ast{\nabla \xhi^h(t)}}\dx\dt-\alpha^2\int_0^\infty\int_{\mathbb T^d} u_h(t)\zeta(t)\dx\dt\\&\quad  -\alpha^2\int_0^{\infty}\int_{\mathbb T^d} \nabla u_h(t)\cdot \nabla\zeta(t)\dx\dt.
\end{align}

Also, for all $t\in(0,T^*)$, by the minimizing property \eqref{M2} of $\xhi^{h}_{DG}(t)$, Allard’s first
variation formula \cite{Allard}, and \cite[Lemma 10]{SH}, it follows that $\xhi_{DG}^h(t)$ also satisfies the approximate Gibbs-Thomson relation:
\begin{align}
	\int_{{\mathbb T^d}\times \mathbb S^{d-1}}(Id-s\otimes s):\nabla B(x)\, \d \widetilde\mu_t^h(x,s)=\int_{\mathbb T^d} \xhi_{DG}^h(t)\text{div}(w_h(\cdot,t)B)\,\d x.\label{allard}
\end{align}
for all $t\in(0,T^*)$ and all $B\in S_{\xhi^{h}_{DG}(t)}$. Observe that, recalling the definition of $S_{\xhi^{h}_{DG}(t)}$, this space is simply $C^1({\mathbb T^d};\R^d)$ in the MCF case, whereas it actually depends on ${\xhi^{h}_{DG}(t)}$ for the volume preserving MCF case.   Also, the same relation is satisfied by $\xhi^h(t)$, just recalling the definition in the minimizing movements scheme:
\begin{align}
	\int_{{\mathbb T^d}\times \mathbb S^{d-1}}(Id-s\otimes s):\nabla B(x)\,\d \mu_t^h(x,s)=\int_{\mathbb T^d} \xhi^h(t)\text{div}(u_h(\cdot,t)B)\,\d x,\label{allard2}
\end{align}
for all $t\in(0,\infty)$ and all $B\in S_{\xhi^{h}(t)}$. Note that, in the case of volume preserving MCF, we can substitute $w_h$ and $u_h$ with $w_h^0$ and $u_h^0$, that are the zero integral mean projections of $w_h,u_h$. In this volume-preserving framework we can thus pass from $B\in S_{\xhi_{DG}^h(t)}$ and $B\in S_{\xhi^h(t)}$ to any vector field $B\in C^1(\om)$ recalling \cite[Lemma 9]{SH}, which guarantees the existence of the sequences $\lambda_{DG}^h$ and $\lambda_h$ in $L^2(0,\Ts)$ such that, using the energy inequality \eqref{dissiprel} and \eqref{regvbis},
\begin{align}
&\norm{\lambda_{DG}^h}_{L^2(0,\Ts)}\leq C\left(E[\xhi_0], \int_\om \xhi_0\dx\right)(1+\norm{\nabla w_h}_{L^2(0,\Ts;\bL^2(\om))}),\label{lambdas1}\\&
\norm{\lambda_h}_{L^2(0,\Ts)}\leq C\left(E[\xhi_0], \int_\om \xhi_0\dx\right)(1+\norm{\nabla u_h}_{L^2(0,\Ts;\bL^2(\om))}),
    \label{lambdas2}
\end{align}
for any $\Ts>0$.
Then, we have
\begin{align}
	\int_{{\mathbb T^d}\times \mathbb S^{d-1}}(Id-s\otimes s):\nabla B(x)\,\d \widetilde\mu_t^h(x,s)=\int_{\mathbb T^d} \xhi_{DG}^h(t) \text{div}((w_h^0(\cdot,t)+\lambda_{DG}^h)B)\, \d x, \quad \forall B\in C^1(\om),\label{allardvolpres}
\end{align}
as well as 
\begin{align}
	\int_{{\mathbb T^d}\times \mathbb S^{d-1}}(Id-s\otimes s):\nabla B(x)\,\d \mu_t^h(x,s)=\int_{\mathbb T^d} \xhi^h(t)\text{div}((u_h^0(\cdot,t)+\lambda_h)B)\,\d x,\quad \forall B\in C^1(\om),\label{allard2volpres}
\end{align}
also in the case of volume preserving MCF.
From now on we will omit for simplicity the dependence on $E[\xhi_0]$ of all the constants.

\subsection{Limit as $h\to0$}
\subsubsection{Compactness as $h\to0$}\label{compact}
From \eqref{regvbis} we deduce that there exist $ \tw\in L^2_{loc}([0,\infty);H^1({\mathbb T^d}))$ and $u\in L^2_{loc}([0,\infty);H^1({\mathbb T^d}))$ (thus globally defined on $(0,\infty)$), such that, up to subsequences, as $h\to0$, 
\begin{align}
\label{whconv}	w_h\rightharpoonup  \tw &\quad\text{weakly in }L^2(0,\Ts;\Hz),\\
	u_h\rightharpoonup u &\quad\text{weakly in }L^2(0,\Ts;\Hz),\label{uhA}
\end{align}
for any $\Ts>0$. Analogously, in the case of volume preserving MCF, from \eqref{lambdas1}-\eqref{lambdas2} we deduce the existence of $\lambda_1,\lambda_2\in L^2_{loc}(0,\infty)$, possibly different from one another, such that 
\begin{align}
\label{lambdaconv1}	\lambda_{DG}^h\rightharpoonup  \lambda_1\quad &\text{weakly in }L^2(0,\Ts),\\
	\lambda_h\rightharpoonup \lambda_2 \quad&\text{weakly in }L^2(0,\Ts),\label{lambdaconv2}
\end{align}
for any $\Ts>0$.
 
 Proceeding in the compactness argument, since $E[\xhi_{DG}^h(t)]$ is monotone nonincreasing by \eqref{monotonicity}, and uniformly bounded by \eqref{dissiprel}, by Helly's selection theorem there exists a nonincreasing function $\tF$ such that, up to subsequences,
 	\begin{align}
 		E[\xhi_{DG}^h(t)]\to \tF(t)\quad \text{as }h\to 0,\quad \forall t\geq 0.\label{Helly}
 	\end{align}
This will be needed to identify the limit of $\normmm{\mu_t^h}_{\mathbb S^{d-1}}({{\mathbb T^d}})$ as $h\to0$. Analogously, concerning $E[\xhi^h(t)]$, recalling \eqref{monotonicity2} and \eqref{fundamental}, Helly's selection theorem also gives that there exists a nonincreasing function $\tF_1$ such that, up to subsequences,
 	\begin{align}
 		E[\xhi^h(t)]\to \tF_1(t)\quad \text{as }h\to 0,\quad \forall t\geq 0.\label{Helly2}
 	\end{align} 
As a consequence, passing to the limit in \eqref{fundamental}, we infer that, for any $t\geq0$,
\begin{align*}
    \tF(t)\leq \tF_1(t)\leq \tF(s),\quad \forall 0\leq s<t,
\end{align*}
so that, if $t\geq0$ is a continuity point of $\tF$, we infer $\tF(t)= \tF_1(t)$ by letting $s\nearrow t^-$. Since $\tF$ is monotone nonincreasing, the set of (jump) discontinuities is countable and thus has zero $\mathcal L^1$ measure. We can then deduce 
\begin{align}
\tF(t)=\tF_1(t),\quad \text{ for almost any }t\geq 0.
    \label{identification}
\end{align}
We now need to prove a compactness result for the phase  functions, in order to show that they all converge to the same quantity. We use the well-known Aubin-Lions-Simon compactness theorem. Let us first observe that, for any $\delta\in(0,\Ts)$, 
\begin{align}
	\int_0^{\Ts-\delta}\Ld{\wchi^h(t+\delta)-\wchi^h(t)}^2\dt \leq  C\delta,\quad \forall h\in(0,1),
	\label{control1b}
\end{align}
where $C>0$ does not depend on $h$. Indeed, we can write, by the Fundamental Theorem of Calculus and the Cauchy-Schwarz inequality,
\begin{align}
\nonumber\int_0^{\Ts-\delta}\Ld{\wchi^h(t+\delta)-\wchi^h(t)}^2\dt &\leq 
	 \int_0^{\Ts-\delta}\int_{t}^{t+\delta}\delta\Ld{\pt \wchi^h(\tau)}^2\d\tau\dt\\&
	 \leq \delta(\Ts-\delta)
	\int_{0}^{	\Ts}\Ld{\pt \wchi^h(\tau)}^2\d\tau
	\nonumber
    \\&
    \leq C_A(\Ts)\delta,\label{ineq1}
\end{align}
recalling \eqref{regvbis}, for some $C_A(\Ts)$ independent of $h$.

Also, notice that, for almost any $t\in(0,\Ts)$, for $n=\floor{\tfrac th}$,
\begin{align}
&	\nonumber\Ld{\chi^h(t)-{\wchi}^h(t)}=\Ld{\chi_n^h-\wchi^h(t)}=\Ld{\wchi^h(nh)-\wchi^h(t)}\\&
	\leq \int_{nh}^{t}\Ld{\pt\wchi^h(\tau)}\d\tau\leq \sqrt{t-nh}\left(\int_0^\Ts \Ld{\pt\wchi^h(\tau)}^2\right)^\onehalf\leq C(\Ts)\sqrt h,
	\label{relation}
\end{align}
entailing
\begin{align}
	\int_0^{\Ts}\Ld{\chi^h(t)-{\wchi}^h(t)}^2\dt\leq C_B(\Ts)h,
	\label{controla}
\end{align}
for some $C_B(\Ts)$ independent of $h$.
Let us now fix $\epsilon>0$ and choose $\wtilde h_1>0$ such that $C_A(\Ts)\wtilde h_1\leq \epsilon$. Consider a sequence $h_n\to 0$. 
Since the right translation operator is continuous in $L^2(0,T^*-h;\dual)$, we can without loss of generality assume that $h_n\leq \widetilde h_1$ for any $n\in\N$.

Then we can find a sufficiently small $\delta_1<\tfrac \epsilon{C_A}$, such that, by \eqref{ineq1},
\begin{align*}
	&\int_0^{\Ts-\delta}\Ld{\chi^{h}(t+\delta)-\chi^{h}(t)}^2\dt\\&
	\leq 3\int_0^{\Ts-\delta}\Ld{\chi^{h}(t+\delta)-\wchi^{h}(t+\delta)}^2\dt+3\int_0^{\Ts-\delta}\Ld{\chi^{h}(t)-\wchi^{h}(t)}^2\dt\\&
	~~~+3\int_0^{\Ts-\delta}\Ld{\wchi^{h}(t+\delta)-\wchi^{h}(t)}^2\dt\\&
	\leq 9\epsilon,\quad \forall h\leq \wtilde h_1,\quad \forall 0<\delta<\delta_1.
\end{align*}
This entails
\begin{align}
	\sup_{n\in \mathbb N}\int_0^{\Ts-\delta}\Ld{\chi^{h_n}(t+\delta)-\chi^{h_n}(t)}^2\dt\to 0\quad \text{ as }\delta\to0.
	\label{controlB}
\end{align}
Concerning $\chi_{DG}^h(t)$, observe that it holds, recalling \eqref{energy1abc} and \eqref{equiv},
\begin{align}
\nonumber	&\int_0^\Ts\Ld{\xhi_{DG}^h(t)-\xhi^h(t)}^2\dt \\&\leq \nonumber	\int_0^\Ts\Th{t}^2\Ld{\frac{1}{T_h(t)}\left(\xhi_{DG}^h(t)-\xhi^h(t)\right)}^2\dt\nonumber\\&
	\leq C(\Ts)h^2,\label{CC}
\end{align}
 where $C_C(\Ts)$ does not depend on $h$. Now fix $\epsilon>0$, choose $\wtilde h_2>0$ such that $C_C(\Ts)\wtilde h_2^2\leq \epsilon$, and consider a sequence $h_n\to 0$. Again we can assume without loss of generality that $h_n\leq \wtilde h_2$ for any $n\in \N$. 

As a consequence, fixing the same subsequence as the one in \eqref{controlB}, by the triangle inequality, \eqref{controlB}, and \eqref{CC}, we deduce  
\begin{align*}
	\sup_{h_n\leq \wtilde h_2}\int_0^{\Ts-\delta}\Ld{\chi^{h_n}_{DG}({t+\delta})-\chi^{h_n}_{DG}({t})}^2\dt\leq 9\epsilon,
\end{align*}
for any $\delta>0$ sufficiently small, which implies,
\begin{align}
	\sup_{n\in\mathbb N}\int_0^{\Ts-\delta}\Ld{\chi^{h_n}_{DG}({t+\delta})-\chi^{h_n}_{DG}({t})}^2\dt\to 0\quad \text{ as } \delta\to0.\label{subseq1}
\end{align}
Now, by \eqref{energy1ab} and \eqref{energy1abc}, we also have 
\begin{align*}
	\wchi^h(\cdot),\xhi^h(\cdot),\xhi_{DG}^h(\cdot)	\in L^\infty_{w*}(0,\Ts;BV(\om;\{0,1\})),
\end{align*}
with norms uniformly bounded in $h$. Therefore, by the embedding $$BV(\om;\{0,1\})\hookrightarrow\hookrightarrow L^{p}({\mathbb T^d})\hookrightarrow \dual,$$ where $p\in(\tfrac{2d}{d+2},\tfrac{d}{d-1})$ (since $d=2,3$), recalling \eqref{control1b}, \eqref{controlB}, \eqref{subseq1}, we can apply the Aubin-Lions-Simon Theorem, together with a diagonal argument, to infer the existence of $\xhi\in L^2_{loc}([0,\infty);L^1({\mathbb T^d}))$, such that, up to a subsequence,
\begin{align*}
	\wchi^h(\cdot),\xhi^h(\cdot),\xhi_{DG}^h(\cdot)\to \xhi(\cdot) \text{ strongly in }L^2(0,\Ts;L^1({\mathbb T^d})),
\end{align*}
for any $\Ts>0$, where the identification of the three limits has been possible thanks to \eqref{controla} and \eqref{CC}. This also entails, up to another subsequence, 
\begin{align*}
	\wchi^h(\cdot),\xhi^h(\cdot),\xhi_{DG}^h(\cdot)\to \xhi(\cdot) \text{ for almost any   }(t,x)\in (0,\Ts)\times{\mathbb T^d},
\end{align*}
which entails that $\xhi\in \{0,1\}$ almost everywhere in $(0,\infty)\times\om$.
Moreover, as all the functions are bounded in $[0,1]$, we infer 
\begin{align}
		\wchi^h(\cdot),\xhi^h(\cdot),\xhi_{DG}^h(\cdot)\to \xhi(\cdot)\text{ strongly in }L^p((0,\Ts)\times\om),\quad \forall p\in[1,\infty).
	\label{finalconv}
\end{align}
As noticed in \cite{AbelsRoger, AbelsP}, since $\xhi^h$ is uniformly bounded in $L^\infty_{w*}(0,\infty;BV(\om;\{0,1\}))$, this implies that $\xhi_h\to \xhi$ weakly* in  $L^\infty_{w*}(0,\infty;BV(\om;\{0,1\}))$, and thus, $\xhi\in L^\infty_{w*}(0,\infty;BV(\om;\{0,1\}))$.

Also, as $\pt \wchi^h$ is uniformly bounded in $L^2(0,\infty;\dual)\hookrightarrow L^2(0,\infty;H^2(\om)')$ by \eqref{regvbis}, we deduce $\xhi\in H^1(0,\infty;H^2(\om)')$, and thus, since this space is reflexive,
\begin{align}
\pt \wchi^h\rightharpoonup \pt \xhi\quad \text{ weakly in }L^2(0,\Ts;H^2(\om)'),
\label{dtchi1}
\end{align}
for any $\Ts>0$. 
Also, an adaptation of \cite[Lemma C.1]{AbelsP}, substituting $H^1({\mathbb T^d})'$ with $H^2({\mathbb T^d})'$, gives 
\begin{align}
	\xhi\in C([0,\Ts];L^p(\om))\quad \forall \Ts>0,\quad 	\forall p\in [1,\infty).
	\label{regchi1a}
\end{align}

\subsubsection{Limit varifolds I. Disintegration of the measure} 

Thanks to the uniform bound of the energy \eqref{dissiprel}, by the weak*-compactness properties of finite Radon measures there exists a measure $\widetilde{\mu}:{\mathcal B((0,\infty)	\times{\mathbb T^d}\times \mathbb S^{d-1})}\to \mathbb R^+$, such that, up to subsequences, 
\begin{align}
	\mathcal L^1\llcorner{(0,\Ts)}\otimes (\wtilde\mu_h^t)_{t\in(0,\Ts)}\rightharpoonup \widetilde{\mu},\quad \text{ weakly* in } \mathcal M((0,\Ts)\times{\mathbb T^d}\times \mathbb S^{d-1}),\label{convmeas}
\end{align}
 for any $\Ts>0$, where $\mathcal B((0,\infty)	\times{\mathbb T^d}\times \mathbb S^{d-1})$ is the Borel $\sigma$-algebra on $(0,\infty)	\times{\mathbb T^d}\times \mathbb S^{d-1}$. This measure $\widetilde\mu$ can be sliced in time thanks to the convergence \eqref{Helly}, following, e.g., \cite{SH, AbelsP}. 
In particular, there
exists an $\mathcal L^1\llcorner(0,\infty)$-measurable function  $\widetilde\sigma$ on $(0, \infty)$ and a weakly-* $\sigma$-measurable family of Radon probability measures $(\tau_t)_t$ on ${\mathbb T^d}\times\mathbb S^{d-1}$, such that
\begin{align}
	\widetilde{\mu}=\mathcal L^1\llcorner(0,\infty)\otimes (\widetilde\mu_t)_{t\in(0,\infty)},
	\label{representation}
\end{align}
with $\widetilde\mu_t=\widetilde{\sigma}(t)\tau_t$.
By the very same arguments, exploiting this time \eqref{Helly2} and \eqref{identification}, we infer that 
there exists $\mu:{\mathcal B((0,\infty)	\times{\mathbb T^d}\times \mathbb S^{d-1})}\to \mathbb R^+$, such that, up to subsequences, 
\begin{align}
	\mathcal L^1\llcorner{(0,\Ts)}\otimes (\mu_h^t)_{t\in(0,\Ts)}\rightharpoonup \mu,\quad \text{ weakly* in } \mathcal M((0,\Ts)\times{\mathbb T^d}\times \mathbb S^{d-1}),\label{convmeas1}
\end{align}
 for any $\Ts>0$, where 
\begin{align}
	\mu=\mathcal L^1\llcorner(0,\infty)\otimes (\mu_t)_{t\in(0,\infty)},
	\label{representation1}
\end{align}
and $\mu_t$ is weakly-* $\mathcal L^1\llcorner(0,\infty)$ measurable. Then (see also, e.g., \cite[Lemma 2]{LH}), recalling the definition of $\mu_h^t$, it holds, exploiting \eqref{Helly2},
\begin{align*}
     \normmm{\mu_t}_{\mathbb S^{d-1}}({\mathbb T^d})\leq \liminf_{h\to\infty}E[\xhi^h(t)]
\end{align*}
for almost any $t>0$. Furthermore, considering the set $[0,\Ts]\times {\mathbb T^d}\times \mathbb S^{d-1}$ which is compact in $[0,\infty)\times{\mathbb T^d}\times \mathbb S^{d-1}$, we infer by \cite[Proposition 4.26]{Maggi}
\begin{align}
   \limsup_{h\to\infty} \int_0^\Ts E[\xhi^h(t)]\dt\leq \int_0^\Ts \normmm{\mu_t}_{\mathbb S^{d-1}}({\mathbb T^d})\dt
   \label{limsup}
\end{align}
for any $\Ts>0$. Now recalling \eqref{Helly2} and \eqref{identification}, we get
\begin{align*}
      \widetilde E(t)=\lim_{h\to\infty} E[\xhi^h(t)]\geq \normmm{\mu_t}_{\mathbb S^{d-1}}({\mathbb T^d})
\end{align*}
for almost any $t>0$. This gives of course 
\begin{align}
     \int_0^\Ts \widetilde E(t)\dt\geq \int_0^\Ts\normmm{\mu_t}_{\mathbb S^{d-1}}({\mathbb T^d})\dt\label{inf}
\end{align}
for any $\Ts>0$. Also, by Lebesgue's Dominated Convergence Theorem, using again \eqref{Helly2}, \eqref{limsup} entails
\begin{align*}
    \int_0^\Ts \widetilde E(t)\dt=\lim_{h\to\infty} \int_0^\Ts E[\xhi^h(t)]\dt=\limsup_{h\to\infty} \int_0^\Ts E[\xhi^h(t)]\dt\leq \int_0^\Ts \normmm{\mu_t}_{\mathbb S^{d-1}}({\mathbb T^d})\dt
\end{align*}
for any $\Ts>0$, which combined with \eqref{inf} gives
\begin{align*}
    \int_0^\Ts \widetilde E(t)\dt= \int_0^\Ts \normmm{\mu_t}_{\mathbb S^{d-1}}({\mathbb T^d})\dt,
\end{align*}
entailing, since $\Ts$ is arbitrary, 
\begin{align}
    \widetilde E(t)=\normmm{\mu_t}_{\mathbb S^{d-1}}({\mathbb T^d})\label{identif}
\end{align}
for almost any $t>0$.

In the following we need to retrieve information on $\normmm{\rho_h\ast \nabla \xhi^h}$, which is apparently not easy to be treated. Nevertheless, as it will be clear in the sequel (see, e.g., \eqref{upperbound} below), it is enough to obtain a suitable upper bound on this measure. We thus restrict our attention to the simpler positive measure $\rho_h\ast \normmm{\nabla \xhi^h}$, noticing that, by \eqref{convmeas1}, we have,  since $\mathbb S^{d-1}$ is compact,  
\begin{align}
	\mathcal L^1\llcorner{(0,\Ts)}\otimes (\normmm{\mu_h^t}_{\mathbb S^{d-1}})_{t\in(0,\Ts)}\rightharpoonup \mathcal L^1\llcorner{(0,\Ts)}\otimes (\normmm{\mu_t}_{\mathbb S^{d-1}})_{t\in(0,\Ts)}\quad \text{ weakly* in } \mathcal M((0,\Ts)\times{\mathbb T^d}),\label{convmeas2}
\end{align}
entailing
\begin{align}
	\mathcal L^1\llcorner{(0,\Ts)}\otimes (\rho_h\ast\vert{\nabla\xhi^h(t)}\vert)_{t\in(0,\Ts)}\rightharpoonup \mathcal L^1\llcorner{(0,\Ts)}\otimes (\normmm{\mu_t}_{\mathbb S^{d-1}})_{t\in(0,\Ts)}\quad \text{ weakly* in } \mathcal M((0,\Ts)\times{\mathbb T^d}).\label{convmeas3}
\end{align}
Here we made a slight abuse of notation, writing $$\rho_h\ast \normmm{\nabla \xhi^h(t)}(A)=\int_A \rho_h\ast \normmm{\nabla \xhi^h(t)}(x)\dx$$ for any Borel measurable set $A\subset \R^d$. Analogously, we set $$\normmm{\rho_h\ast {\nabla \xhi^h(t)}}(A):=\int_A \normmm{\rho_h\ast {\nabla \xhi^h(t)}}(x)\dx$$ for any Borel measurable set $A\subset \R^d$.
To prove \eqref{convmeas3}, given $\varphi\in C_c((0,\Ts)\times {\mathbb T^d})$ we get 
\begin{align*}
    \int_0^\Ts\int_{\mathbb T^d} \varphi(x,t)\rho_h\ast\normmm{\nabla \xhi^h(t)}\dx\dt= \int_0^\Ts\int_{\mathbb T^d} \rho_h\ast\varphi(x,t)\, \d\normmm{\nabla \xhi^h(t)}\dt,
\end{align*}
and we have 
\begin{align*}
&\normmm{\int_0^\Ts\int_{\mathbb T^d} \rho_h\ast\varphi(x,t)\,\d\normmm{\nabla \xhi^h(t)}\dt}\\&\leq \normmm{\int_0^\Ts\int_{\mathbb T^d} \varphi(x,t)\,\d\normmm{\nabla \xhi^h(t)}\dt-\int_0^\Ts\int_{\mathbb T^d} \varphi(x,t)\,\d\normmm{\mu_t}_{\mathbb S^{d-1}}\dt}\\&\quad +\normmm{\int_0^\Ts\int_{\mathbb T^d} (\rho_h\ast\varphi(x,t)-\varphi(x,t))\, \d\normmm{\nabla \xhi^h(t)}\dt}\to 0\quad\text{ as }h\to0,
\end{align*}
since the first term on the right-hand side converges to zero by \eqref{convmeas2}, whereas the second one converges as, recalling \eqref{fundamental} and \eqref{dissiprel},
\begin{align*}
 &\normmm{\int_0^\Ts\int_{\mathbb T^d} (\rho_h\ast\varphi(x,t)-\varphi(x,t))\,\d\normmm{\nabla \xhi^h(t)}\dt}\\&\leq \normmm{\int_0^\Ts\norm{\rho_h\ast \varphi(t)-\varphi(t)}_{C(K)}\normmm{\nabla\xhi^h(t)}(\om)\dt}\\&
 \leq \int_0^\Ts\norm{\rho_h\ast \varphi(t)-\varphi(t)}_{C(K)} E[\xhi_0]\dt,
\end{align*}
with $\text{supp } \varphi \subset [0,\Ts]\times K$, $K\subset \R^d$ compact, and the right-hand converges to zero by Lebesgue's Dominated Convergence Theorem, since $\rho_h\ast \varphi(t)\to \varphi(t)$ uniformly on $K\subset {\mathbb T^d}$ compact, for any $t\in (0,\Ts)$.

\subsubsection{Gibbs-Thomson law as $h\to0$ and generalized mean curvature.}
\label{gibbs}
For any $\Ts>0$, let us consider a vector field $ B:(0,\Ts)\times\om\to \R^d$ continuous and compactly supported in $ (0,\Ts)\times\om$, such that, for any $t\in(0,\Ts)$, $B(\cdot,t)\in C^1({\mathbb T^d},\mathbb R^d)$.

Then by \eqref{convmeas} and \eqref{representation} it holds 
\begin{align}
	\int_0^\Ts \delta\widetilde\mu_t^h(B)\dt\to \int_0^\Ts \delta\widetilde{\mu}_t(B)\dt. \label{v1}
\end{align}
We now distinguish between the MCF case and the volume preserving MCF, which are slightly different.
\begin{itemize}
\item (MCF). We multiply the Gibbs-Thomson relation \eqref{allard} by a smooth and
compactly supported test function on $(0, \Ts)$, integrate in time, and pass to the limit as $h\to0$, by means of \eqref{whconv} and \eqref{finalconv}, and then, after again a localization in time, deduce from \eqref{allard} that, for almost any  $t>0$, it holds 
\begin{align}
	&	\delta\wtilde\mu_t(B)=\int_{{\mathbb T^d}\times \mathbb S^{d-1}}(Id-s\otimes s):\nabla B(x)\,\d \widetilde{\mu}_t(x,s)\nonumber\\&=
	\int_{\mathbb T^d} \chi(\cdot,t)\text{div}(\tw(\cdot,t)B)\,\d x,\label{variation2}
\end{align} 
for any $B\in C^1({\mathbb T^d};\R^d)$.

Analogously, the uniform control \eqref{regvbis} on $u_h$, and \eqref{allard2} give 
\begin{align}
	&	\delta\mu_t(B)=\int_{{\mathbb T^d}\times \mathbb S^{d-1}}(Id-s\otimes s):\nabla B(x)\,\d \mu_t(x,s)\nonumber\\&=
	\int_{\mathbb T^d} \chi(\cdot,t)\text{div}(u(\cdot,t)B)\,\d x,\label{variation2b}
\end{align} 
for any $B\in C^1({\mathbb T^d};\R^d)$ and almost any $t>0$. 

\item (volume preserving MCF). We multiply the Gibbs-Thomson relation \eqref{allardvolpres} by a smooth and
compactly supported test function on $(0, \Ts)$, integrate in time, and pass to the limit as $h\to0$, by means of \eqref{whconv}, \eqref{lambdaconv1}, and \eqref{finalconv}, to deduce from \eqref{allardvolpres} that, for almost any  $t>0$, it holds 
\begin{align}
	&	\delta\wtilde\mu_t(B)=\int_{{\mathbb T^d}\times \mathbb S^{d-1}}(Id-s\otimes s):\nabla B(x)\,\d \widetilde{\mu}_t(x,s)\nonumber\\&=
	\int_{\mathbb T^d} \chi(\cdot,t)\text{div}((\tw^0(\cdot,t)+\lambda_1(t))B)\,\d x,\label{variation2volpres}
\end{align} 
for any $B\in C^1({\mathbb T^d};\R^d)$, where $w^0$ is the zero integral mean projection of $w$. With the same argument, the uniform control \eqref{regvbis} on $u_h$, and \eqref{allard2volpres} allow to infer 
\begin{align}
	&	\delta\mu_t(B)=\int_{{\mathbb T^d}\times \mathbb S^{d-1}}(Id-s\otimes s):\nabla B(x)\,\d \mu_t(x,s)\nonumber\\&=
	\int_{\mathbb T^d} \chi(\cdot,t)\text{div}((u^0(\cdot,t)+\lambda_2(t))B)\,\d x,\label{variation2volpres1}
\end{align} 
for any $B\in C^1({\mathbb T^d};\R^d)$ and almost any $t>0$.
\end{itemize}
Following then \cite[Lemmas 4.1-4.2]{Roger}, we can prove that, for any $B\in L^2(0,\Ts; C^1({\mathbb T^d};\R^d))$,
\begin{align}
	\int_0^\Ts \delta\widetilde\mu_t^h(B)\,\d t \to -\int_0^\Ts\int_{\mathbb T^d} H_{\xhi(\cdot,t)}\frac{\nabla\xhi(\cdot,t)}{\normmm{\nabla\chi(\cdot,t)}}\cdot B\,\d \normmm{\nabla\xhi(\cdot,t)}\dt,\label{cona}
\end{align}
for any $\Ts>0$, where $H_\xhi$ is the generalized mean curvature in the sense of \cite{Roger}, which is intrinsic to the surface $\text{supp}\normmm{\nabla\xhi(\cdot,t)}$ for almost every $t>0$. Note that this property will not be preserved in the limit as $\alpha\to0$. 

Again, thanks to \eqref{convmeas1}, the uniform control \eqref{regvbis} on $u_h$, and \eqref{allard2}, the same argument leads to 
\begin{align}
	\int_0^\Ts \delta\mu_t^h(B) \,\d t \to -\int_0^\Ts\int_{\mathbb T^d} H_{\xhi(\cdot,t)}\frac{\nabla\xhi(\cdot,t)}{\normmm{\nabla\chi(\cdot,t)}}\cdot B\d \normmm{\nabla\xhi(\cdot,t)}\dt\label{conab}
\end{align}
for any $B\in L^2(0,\Ts;C^1(\om))$. It is then immediate to infer that 
\begin{align}
	&	\delta\wtilde\mu_t(B)=\delta \mu_{t}(B),\label{variation4}
\end{align} 
for any $B\in C^1({\mathbb T^d};\R^d)$ and for almost any $t>0$. Note that this identification is crucial to prove the validity of a suitable energy inequality. This is the only reason why we need to limit ourselves to dimensions $d=2,3$, in order to apply the fundamental results of \cite{Schatzle}. Namely, these results require the chemical potential at almost any time to belong to $W^{1,p}(\om)$ with $p>\tfrac d2$, which would be $p>2$ if $d>3$, and this control is not possible in this framework. Also, this identification is the reason why we need to keep $\alpha>0$ fixed and first let $h\to0$, since it only holds at the level of the Mullins-Sekerka-type approximation, as it is related to the higher integrability of the generalized mean curvature vector given by the chemical potential.

\subsubsection{Limits of the dissipative terms as $h\to0$: mean curvature term related to $w_h$}

We first need to consider the term which will be associated to the generalized mean curvature, namely we can write, by Young's inequality,
\begin{align}
    \nonumber&\frac12\int_0^\Ts\int_{\mathbb T^d} w_h(t)^2\normmm{\rho_h\ast{\nabla \xhi^h(t)}}\dx\dt\\&\geq \int_0^\Ts\int_{\mathbb T^d} w_h(t)\zeta\normmm{\rho_h\ast{\nabla \xhi^h(t)}}\dx\dt-\frac 12\int_0^\Ts\int_{\mathbb T^d} \zeta^2\normmm{\rho_h\ast{\nabla \xhi^h(t)}}\dx\dt,\label{inequality}
\end{align}
for $\zeta=\xi\cdot\frac{\rho_h\ast\nabla\xhi^h(t)}{\normmm{\rho_h\ast \nabla\xhi^h(t)}}$, with $\xi\in C^\infty_c((0,\Ts)\times{\mathbb T^d};\R^d)$ and $\xi(t)\in S_{\xhi_t}$ for any $t\in(0,\Ts)$, \textcolor{black}{where we tacitly set $\frac{\rho_h\ast\nabla\xhi^h(t)}{\normmm{\rho_h\ast \nabla\xhi^h(t)}}=0$ when $\normmm{\rho_h\ast \nabla\xhi^h(t)}=0$}. Note that the condition of $\xi(t)\in S_{\xhi_t}$ has a role only in the case of volume preserving MCF. Therefore, since $\normmm{\frac{\rho_h\ast\nabla\xhi^h(t)}{\normmm{\rho_h\ast \nabla\xhi^h(t))}}}=1$, we have
\begin{align}
    &\nonumber\frac12\int_0^\infty\int_{\mathbb T^d} w_h(t)^2\normmm{\rho_h\ast{\nabla \xhi^h(t)}}\dx\dt\\&\geq {\int_0^\Ts\int_{\mathbb T^d} w_h(t)\xi(t)\cdot\rho_h\ast{\nabla \xhi^h(t)}}\dx\dt-\frac 12\int_0^\Ts\int_{\mathbb T^d} \normmm{\xi(t)}^2\normmm{\rho_h\ast{\nabla \xhi^h(t)}}\dx\dt.\label{curvat}
\end{align}
Now, thanks to the properties of the mollifier $\rho_h$, it holds $\nabla(\rho_h\ast \xhi^h)=\rho_h\ast\nabla\xhi^h$, and thus we deduce, after using the Gau{\ss} Theorem, that
\begin{align}
    {\int_0^\Ts\int_{\mathbb T^d} w_h(t)\xi(t)\cdot\rho_h\ast{\nabla \xhi^h(t)}}\dx\dt=-{\int_0^\Ts\int_{\mathbb T^d} \Div(w_h(t)\xi(t))\rho_h\ast{ \xhi^h(t)}}\dx\dt.\label{identity}
\end{align}
Observe that it holds, recalling \eqref{finalconv} and the properties of the mollifier $\rho_h$,
\begin{align}
&\nonumber\norm{\rho_h\ast \xhi^h-\xhi_{DG}^h}_{L^p((0,\Ts)\times{\mathbb T^d})}\nonumber\\&\leq\nonumber \norm{\rho_h\ast (\xhi^h-\chi)}_{L^p((0,\Ts)\times{\mathbb T^d})}+\norm{\rho_h\ast\xhi-\chi}_{L^p((0,\Ts)\times{\mathbb T^d})}+\norm{ \xhi_{DG}^h-\chi}_{L^p((0,\Ts)\times{\mathbb T^d})}\nonumber\\&
\leq 
\norm{\xhi^h-\chi}_{L^p((0,\Ts)\times{\mathbb T^d})}
+\norm{\rho_h\ast\xhi-\chi}_{L^p((0,\Ts)\times{\mathbb T^d})}+\norm{ \xhi_{DG}^h-\chi}_{L^p((0,\Ts)\times{\mathbb T^d})}\to 0\text{ as }h\to0,
    \label{convergenceA}
\end{align}
for any $p\in[1,\infty)$.
Then, using \eqref{regvbis} and \eqref{convergenceA}, by Cauchy-Schwarz inequality we infer
\begin{align*}
 & \normmm{{\int_0^\Ts\int_{\mathbb T^d} \Div(w_h(t)\xi(t))\rho_h\ast{ \xhi^h(t)}}\dx\dt-{\int_0^\Ts\int_{\mathbb T^d} \Div(w_h(t)\xi(t)){ \xhi_{DG}^h(t)}}\dx\dt} \\&
 \leq C\norm{w_h}_{L^2(0,\Ts;H^1({\mathbb T^d}))}\norm{\xi}_{L^\infty(0,\Ts;C^1(\om))}\norm{\rho_h\ast\xhi^h-\xhi_t^{\Th{\cdot}}}_{L^{2}((0,\Ts)\times{\mathbb T^d})} \\&
 \leq \frac C{\alpha} \norm{\xi}_{L^\infty(0,\Ts;C^1(\om))}\norm{\rho_h\ast\xhi^h-\xhi_t^{\Th{\cdot}}}_{L^d((0,\Ts)\times{\mathbb T^d})}\to 0\quad\text{ as }h\to0,
\end{align*}
for any $\Ts>0$, where we exploited the convergence properties of the mollifier and the regularity of the (fixed) vector field $\xi$.

Moreover, using the convergences \eqref{whconv} and \eqref{finalconv}, together with the properties of the mollifiers, we immediately deduce that
\begin{align*}
    \int_0^\Ts\int_{\mathbb T^d} \Div(w_h(t)\xi(t))\rho_h\ast{ \xhi^h(t)}\dx\dt\to \int_0^\Ts\int_{\mathbb T^d} \Div(w(t)\xi(t)){ \xhi(t)}\dx\dt\quad\text{ as }h\to 0.
\end{align*}
Therefore, since $\xi(t)\in S_{\xhi_t}$ for any $t\in(0,\Ts)$, using the Gibbs-Thomson law \eqref{variation2} (cf. \eqref{variation2volpres1} for the volume preserving MCF), together with the crucial identification \eqref{variation4}, we can identify the limit, obtaining 
\begin{align*}
    \int_0^\Ts\int_{\mathbb T^d} \Div(w_h(t)\xi(t))\rho_h\ast{ \xhi^h(t)}\dx\dt\to \int_0^\Ts\delta\mu_t(\xi(t))\dt\quad\text{ as }h\to 0
\end{align*}
for any $\xi\in C_c^1((0,\Ts);S_{\xhi_{(\cdot)}})$.

We are left with the identification of the limit of the second term in the inequality \eqref{curvat}. To this aim, as apparently it is not trivial to prove that $\normmm{\rho_h\ast\nabla\xhi^h}$ weakly-* converges as measures, we exploit \eqref{convmeas3} and deduce, since $\rho_h$ is a positive mollifier,
\begin{align}
    \frac 12\int_0^\Ts\int_{\mathbb T^d} \normmm{\xi(t)}^2\normmm{\rho_h\ast{\nabla \xhi^h(t)}}\dx\dt\leq \frac12 \int_0^\Ts\int_{\mathbb T^d} \normmm{\xi(t)}^2\rho_h\ast\normmm{{\nabla \xhi^h(t)}}\dx\dt,\label{upperbound}
\end{align}
and this term indeed converges by \eqref{convmeas3}, namely
\begin{align*}
    \frac12 \int_0^\Ts\int_{\mathbb T^d} \normmm{\xi(t)}^2\rho_h\ast\normmm{{\nabla \xhi^h(t)}}\dx\dt\to \frac12 \int_0^\Ts\int_{\mathbb T^d} \normmm{\xi(t)}^2\d\normmm{\mu_t}_{\mathbb S^{d-1}}\dt.
\end{align*}
This means that, from \eqref{curvat} and \eqref{upperbound}, we can infer
\begin{align}
    &\nonumber\frac12\int_0^\Ts\int_{\mathbb T^d} w_h(t)^2\normmm{\rho_h\ast{\nabla \xhi^h(t)}}\dx\dt\\&\geq {\int_0^\Ts\int_{\mathbb T^d} w_h(t)\xi(t)\cdot\rho_h\ast{\nabla \xhi^h(t)}}\dx\dt-\frac 12\int_0^\Ts\int_{\mathbb T^d} \normmm{\xi(t)}^2{\rho_h\ast\normmm{\nabla \xhi^h(t)}}\dx\dt, \label{curvat2}
\end{align}
and taking the $\liminf_{h\to0}$ of the two sides of the inequality we get
\begin{align}
    &\nonumber\liminf_{h\to0}\frac12\int_0^\Ts\int_{\mathbb T^d} w_h(t)^2\normmm{\rho_h\ast{\nabla \xhi^h(t)}}\dx\dt\\&\geq \int_0^\Ts \delta\mu_t(\xi(t))\dt-\frac 12 \int_0^\Ts\int_{\mathbb T^d} \normmm{\xi(t)}^2\d\normmm{\mu_t}_{\mathbb S^{d-1}}\dt,\label{curvat4}
\end{align}
for any $\xi\in C_c^\infty((0,\infty)\times{\mathbb T^d};\R^d)$ such that $\xi(t)\in S_{\xhi_t}$ for any $t>0$. Analogously, we can argue on the time intervals $(s,T')$, $0\leq s<T'<\infty$, and obtain the desired
\begin{align}
&\nonumber\liminf_{h\to0}\frac12\int_s^{T'}\int_{\mathbb T^d} w_h(t)^2\normmm{\rho_h\ast{\nabla \xhi^h(t)}}\dx\dt\\&\geq \int_s^{T'} \delta\mu_t(\xi(t))\dt-\frac 12 \int_s^{T'} \norm{\xi(t)}^2_{\mathcal V_{\xhi_t}}\dt,\quad\forall{ \xi\in C^1_c((s,T');S_{\xhi_{(\cdot)}})},\label{curvat5}
\end{align}
where we recall that the inner product on $\mathcal V_{\xhi_t}$ is induced by the $L^2(\om;\d\normmm{\mu_t}_{\sf})$ norm (see \eqref{Vchi}).
Taking now the supremum over $\xi\in C^1_c((s,T');\mathcal S_{\chi_{(\cdot)}})$ gives 
\begin{align}
\liminf_{h\to0}\frac12\int_s^{T'}\int_{\mathbb T^d} w_h(t)^2\normmm{\rho_h\ast{\nabla \xhi^h(t)}}\dx\dt\geq \onehalf\int_s^{T'} \normmm{\partial E(\mu_t)}_{\mathcal V_{\xhi_t}}^2\dt,\label{curvat6}\quad \forall 0\leq s<T',
\end{align}
recalling \eqref{slope}.

Concerning the other dissipative terms in $w_h$, we simply have by lower semicontinuity due to \eqref{whconv}, 
\begin{align*}
 &  \alpha\norm{ w}_{L^2(s,T';L^2({\mathbb T^d}))}^2+  \alpha\norm{\nabla  w}_{L^2(s,T';L^2({\mathbb T^d}))}^2 \\&\leq \liminf_{h\to 0}\alpha\norm{w_h}_{L^2(s,T';L^2({\mathbb T^d}))}^2+  \liminf_{h\to 0}\alpha\norm{\nabla w_h}_{L^2(s,T';L^2({\mathbb T^d}))}^2\\&\leq \liminf_{h\to0}\left( \alpha\norm{w_h}_{L^2(s,T';L^2({\mathbb T^d}))}^2+  \alpha\norm{\nabla w_h}_{L^2(s,T';L^2({\mathbb T^d}))}^2\right)
\end{align*}
for any $0<s<T$.

\subsubsection{Limits of the dissipative terms as $h\to0$: velocity term related to $u_h$}
We now need to pass to study the velocity term appearing in the energy, which is the one related to $u_h$. This is delicate and requires special attention. To this aim, we first introduce the Hilbert space 
$$W:=L^2(0,\infty;H^1({\mathbb T^d}))\cap L^2((0,\infty);L^2({\mathbb T^d},\d\normmm{\mu_{(\cdot)}}_{\mathbb S^{d-1}})),$$
inducing the norm
\begin{align*}
\norm{v}_{W}^2:={\alpha\norm{v}_{L^2(0,\infty;L^2({\mathbb T^d}))}^2+\alpha\norm{\nabla v}_{L^2(0,\infty;L^2({\mathbb T^d}))}^2+\norm{v}_{L^2(0,\infty;L^2({\mathbb T^d},\d\normmm{\mu_{(\cdot)}}_{\mathbb S^{d-1}}))}^2},
\end{align*}
and corresponding inner product
\begin{align*}
    (v,w)_W=\int_0^\infty\int_{\mathbb T^d} vw\d\normmm{\mu_t}_{\mathbb S^{d-1}}\dt+\alpha\int_0^\infty\int_{\mathbb T^d} vw\dx\dt+\alpha\int_0^\infty\int_{\mathbb T^d} \nabla v\cdot\nabla w\dx\dt,\quad \forall v,w\in W.
\end{align*}
\textcolor{black}{Note that we crucially chose $\alpha$ and not $\alpha^2$ in this definition.}
We now set, given $\zeta\in C^\infty_c((0,\infty)\times\om)$, the weak time derivative as 
$$
\langle\pt \xhi,\zeta\rangle:=-\int_0^\infty\int_\om \xhi\pt\zeta\dx\dt.
$$
Using \eqref{timederiv}, the convergence \eqref{finalconv} (after an integration by parts) and \eqref{uhA}, for any $\zeta\in C_c^\infty({\mathbb T^d}\times(0,\Ts))$ we get, using Cauchy-Schwarz inequality,
\begin{align}
&\nonumber\normmm{\langle\partial_t{\xhi}(t)\zeta(t)\rangle}=\normmm{\int_0^{\Ts}\int_{\mathbb T^d}{\xhi}(t)\partial_t\zeta(t)\dx\dt}\\&\nonumber= \liminf_{h\to 0}\normmm{\int_0^{\Ts}\int_{\mathbb T^d}{\widehat\xhi_h}(t)\partial_t\zeta(t)\dx\dt}\\&\nonumber
\leq \liminf_{h\to0}\int_0^\Ts \int_{\mathbb T^d} {u_h(t)\zeta(t)}\normmm{\rho_h\ast \nabla\chi^h(t)}\dx\dt\\&
\quad +\alpha^2\normmm{\int_0^\Ts\int_\om u(t)\zeta(t)\dx\dt} +\alpha^2\normmm{\int_0^{\Ts}\int_{\mathbb T^d} \nabla u(t)\cdot \nabla\zeta(t)\dx\dt}.\label{dtc}
\end{align}
Proceeding in the estimate, we note that, recalling \eqref{hutchinson} and \eqref{convmeas3}, by Cauchy-Schwarz inequality we get
\begin{align*}
   & \liminf_{h\to0}\int_0^\Ts \int_{\mathbb T^d} {u_h(t)\zeta(t)}\normmm{\rho_h\ast \nabla\chi^h(t)}\dx\dt\\&\leq \liminf_{h\to0}\left(\left(\int_0^\Ts \int_{\mathbb T^d} \normmm{u_h(t)}^2\normmm{\rho_h\ast \nabla\chi^h(t)}\dx\dt\right)^\frac12\left(\int_0^\Ts \int_{\mathbb T^d} \normmm{\zeta(t)}^2\normmm{\rho_h\ast \nabla\chi^h(t)}\dx\dt\right)^\frac12\right)\\&
    \leq E[\xhi_0]^\onehalf\liminf_{h\to0}\left(\int_0^\Ts \int_{\mathbb T^d} \normmm{\zeta(t)}^2\normmm{\rho_h\ast \nabla\chi^h(t)}\dx\dt\right)^\frac12\\& \leq E[\xhi_0]^\onehalf\liminf_{h\to0}\left(\int_0^\Ts \int_{\mathbb T^d} \normmm{\zeta(t)}^2\rho_h\ast \normmm{\nabla\chi^h(t)}\dx\dt\right)^\frac12\\&  \leq E[\xhi_0]^\onehalf\left(\int_0^\Ts \int_{\mathbb T^d} \normmm{\zeta(t)}^2\d\normmm{\mu_t}_{\mathbb S^{d-1}}\dt\right)^\frac12.
\end{align*}
Therefore, using again the estimate on $\partial_t\xhi$ above, we get, after using Cauchy-Schwarz inequality also for the other terms in the right-hand side of \eqref{dtc},
\begin{align*}
&\normmm{-\int_0^{\Ts}\int_{\mathbb T^d}{\xhi}(t)\pt \zeta(t)\dx\dt}\\&
\leq E[\xhi_0]^\onehalf\left(\int_0^\Ts \int_{\mathbb T^d} \normmm{\zeta(t)}^2\d\normmm{\mu_t}_{\mathbb S^{d-1}}\dt\right)^\frac12\\&
\quad +\alpha^\frac12\left(\alpha^2{\int_0^\Ts\int_\om \normmm{u(t)}^2\dx\dt}\right)^\frac12\left(\alpha{\int_0^\Ts\int_\om \normmm{\zeta(t)}^2\dx\dt}\right)^\frac12\\&\quad +\alpha^\frac12\left(\alpha^2{\int_0^{\Ts}\int_{\mathbb T^d} \normmm{\nabla u(t)}^2\dx\dt}\right)^\frac12\left(\alpha{\int_0^{\Ts}\int_{\mathbb T^d} \normmm{\nabla \zeta(t)}^2\dx\dt}\right)^\frac12\\&
\leq (1+2\alpha^\frac12)E[\xhi_0]^\frac12 \norm{\zeta}_W.
\end{align*}
Extending by density $\partial_t\chi\in W'$, since $\Ts$ and $\zeta$  are arbitrary, we deduce that there exists $\widehat V\in W$ such that  
\begin{align}
    \langle \partial_t \xhi, \zeta\rangle=(\widehat V,\zeta)_W=\int_0^\infty\int_{\mathbb T^d}  \widehat V\zeta\d\normmm{\mu_t}\dt+\alpha\int_0^\infty\int_{\mathbb T^d} \widehat V\zeta\dx\dt+\alpha\int_0^\infty\int_{\mathbb T^d}\nabla \widehat V\cdot\nabla \zeta\dx\dt\label{base11}
\end{align}
for any $\zeta\in C_c^\infty((0,\infty)\times\om)$. This identity finally explains why the identification \eqref{variation4}, and thus the limitation to dimensions $d=2,3$, is crucial. Indeed, the convergence \eqref{convmeas3} leads to $\mu_t$, and not $\widetilde \mu_t$, which are in principle different, since the only available information is that $\normmm{\mu_t}_{\sf}(\om)=\normmm{\widetilde\mu_t}_{\sf}(\om)$ due to \eqref{identification}, and this is of course not enough to identify the two measures.

 Note that identity \eqref{base11} will become the desired velocity equation when $\alpha\to0$, and that in general $\widehat V$ might be different from the limit potential $u$. The potential $u$ is indeed here only auxiliary and will not appear in the limit as $\alpha\to0$. We now introduce the set $W_{\Ts}$ with the same definition as $W$, but restricted on the time interval $(0,\Ts)$. Then, coming back to \eqref{dtc}, by Cauchy-Schwarz inequality and the lower semicontinuity of the norms we get, for any $\zeta\in C_c^\infty((0,\infty)\times\om)$, 
\begin{align*}
    \normmm{(\widehat V,\zeta)_{W_\Ts}}&=\normmm{\langle \pt \xhi,\zeta\rangle}^2\\&\leq \liminf_{h\to0}\left(\int_0^\Ts \int_{\mathbb T^d} \normmm{u_h(t)}^2\normmm{\rho_h\ast \nabla\chi^h(t)}\dx\dt\right)^\frac12\norm{\zeta}_{L^2(0,\Ts;L^2(\om;\d\normmm{\mu_{(\cdot)}}_{\sf}))}\\&\quad +\alpha^\onehalf\left(\alpha^2\norm{u}_{L^2(0,\Ts;L^2({\mathbb T^d}))}^2\right)^\frac12\alpha^\onehalf\norm{\zeta}_{L^2(0,\Ts;L^2(\om))}\\&\quad +\alpha^\onehalf\left(\alpha^2\norm{\nabla u}_{L^2(0,\Ts;L^2({\mathbb T^d}))}^2\right)^\frac12\alpha^\onehalf\norm{\zeta}_{L^2(0,\Ts;H^1(\om))}\\&\leq \liminf_{h\to0}\left(\int_0^\Ts \int_{\mathbb T^d} \normmm{u_h(t)}^2\normmm{\rho_h\ast \nabla\chi^h(t)}\dx\dt\right)^\frac12\norm{\zeta}_{W_\Ts}\\&\quad +\alpha^\frac12\liminf_{h\to0}\left(\alpha^2\norm{u_h}_{L^2(0,\Ts;L^2({\mathbb T^d}))}^2\right)^\frac12\norm{\zeta}_{W_\Ts}\\&\quad +\alpha^\frac12\liminf_{h\to0}\left(\alpha^2\norm{\nabla u_h}_{L^2(0,\Ts;L^2({\mathbb T^d}))}^2\right)^\frac12\norm{\zeta}_{W_\Ts},
\end{align*}
where we used, recalling again \eqref{convmeas3}, the inequality
\begin{align*}
   & \liminf_{h\to0}\int_0^\Ts \int_{\mathbb T^d} {u_h(t)\zeta(t)}\normmm{\rho_h\ast \nabla\chi^h(t)}\dx\dt\\&\leq \liminf_{h\to0}\left(\left(\int_0^\Ts \int_{\mathbb T^d} \normmm{u_h(t)}^2\normmm{\rho_h\ast \nabla\chi^h(t)}\dx\dt\right)^\frac12\left(\int_0^\Ts \int_{\mathbb T^d} \normmm{\zeta(t)}^2\normmm{\rho_h\ast \nabla\chi^h(t)}\dx\dt\right)^\frac12\right)\\&
    \leq \liminf_{h\to 0}\left(\left(\int_0^\Ts \int_{\mathbb T^d} \normmm{u_h(t)}^2\normmm{\rho_h\ast \nabla\chi^h(t)}\dx\dt\right)^\frac12\left(\int_0^\Ts \int_{\mathbb T^d} \normmm{\zeta(t)}^2\rho_h\ast\normmm{ \nabla\chi^h(t)}\dx\dt\right)^\frac12\right)\\& \leq \liminf_{h\to 0}\left(\int_0^\Ts \int_{\mathbb T^d} \normmm{u_h(t)}^2\normmm{\rho_h\ast \nabla\chi^h(t)}\dx\dt\right)^\frac12\lim_{h\to0}\left(\int_0^\Ts \int_{\mathbb T^d} \normmm{\zeta(t)}^2\rho_h\ast \normmm{\nabla\chi^h(t)}\dx\dt\right)^\frac12\\&  =\liminf_{h\to 0}\left(\int_0^\Ts \int_{\mathbb T^d} \normmm{u_h(t)}^2\normmm{\rho_h\ast \nabla\chi^h(t)}\dx\dt\right)^\frac12\left(\int_0^\Ts \int_{\mathbb T^d} \normmm{\zeta(t)}^2\d\normmm{\mu_t}_{\mathbb S^{d-1}}\dt\right)^\frac12.
\end{align*}
Then, using the definition of $W_\Ts$ and applying Young's inequality, we infer
\begin{align*}
    &\norm{\widehat V}_{W_{\Ts}}^2\\&\leq \left(\liminf_{h\to0}\left(\int_0^\Ts \int_{\mathbb T^d} \normmm{u_h(t)}^2\normmm{\rho_h\ast \nabla\chi^h(t)}\dx\dt\right)^\frac12\right.\\&\qquad\qquad\quad \left.+\alpha^\frac12\liminf_{h\to0}\left(\alpha^2\norm{u_h}_{L^2(0,\Ts;L^2({\mathbb T^d}))}^2\right)^\frac12+\alpha^\frac12\liminf_{h\to0}\left(\alpha^2\norm{\nabla u_h}_{L^2(0,\Ts;L^2({\mathbb T^d}))}^2\right)^\frac12\right)^2\\&
    \leq \liminf_{h\to0}\left(\int_0^\Ts\int_{\mathbb T^d} \normmm{u_h(t)}^2\normmm{\rho_h\ast \nabla\chi^h(t)}\dx\dt\right)\\&
    \quad +2\alpha^\onehalf\liminf_{h\to0}\left(\int_0^\Ts \int_{\mathbb T^d} \normmm{u_h(t)}^2\normmm{\rho_h\ast \nabla\chi^h(t)}\dx\dt\right)^\frac12\\&\quad\times\left(\liminf_{h\to0}\left(\alpha^2\norm{u_h}_{L^2(0,\Ts;L^2({\mathbb T^d}))}^2\right)^\frac12+\liminf_{h\to0}\left(\alpha^2\norm{\nabla u_h}_{L^2(0,\Ts;L^2({\mathbb T^d}))}^2\right)^\frac12\right)\\&\quad +\left(\alpha^\frac12\liminf_{h\to0}\left(\alpha^2\norm{u_h}_{L^2(0,\Ts;L^2({\mathbb T^d}))}^2\right)^\frac12+\alpha^\frac12\liminf_{h\to0}\left(\alpha^2\norm{\nabla u_h}_{L^2(0,\Ts;L^2({\mathbb T^d}))}^2\right)^\frac12\right)^2
    \\&
    \leq (1+\alpha^\frac12)\liminf_{h\to0}\left(\int_0^\Ts\int_{\mathbb T^d} \normmm{u_h(t)}^2\normmm{\rho_h\ast \nabla\chi^h(t)}\dx\dt\right)\\&\quad +(\alpha^\frac12+\alpha)\left(\left(\liminf_{h\to0}\alpha^2\norm{u_h}_{L^2(0,\Ts;L^2({\mathbb T^d}))}^2\right)^\frac12+\left(\liminf_{h\to0}\alpha^2\norm{\nabla u_h}_{L^2(0,\Ts;L^2({\mathbb T^d}))}^2\right)^\frac12\right)^2
    \\&
    \leq (1+\alpha^\frac12)\liminf_{h\to0}\left(\int_0^\Ts\int_{\mathbb T^d} \normmm{u_h(t)}^2\normmm{\rho_h\ast \nabla\chi^h(t)}\dx\dt\right)\\&\quad +2(\alpha^\frac12+\alpha)\liminf_{h\to0}\alpha^2\norm{u_h}_{L^2(0,\Ts;L^2({\mathbb T^d}))}^2+2(\alpha^\frac12+\alpha)\liminf_{h\to0}\alpha^2\norm{\nabla u_h}_{L^2(0,\Ts;L^2({\mathbb T^d}))}^2\\&
    \leq (1+4\alpha^\frac12)\\&\quad \times\liminf_{h\to 0}\left(\int_0^\Ts\int_{\mathbb T^d} \normmm{u_h(t)}^2\normmm{\rho_h\ast \nabla\chi^h(t)}\dx\dt+\alpha^2\norm{u_h}_{L^2(0,\Ts;L^2({\mathbb T^d}))}^2+\alpha^2\norm{\nabla u_h}_{L^2(0,\Ts;L^2({\mathbb T^d}))}^2\right),
\end{align*}
where in the last step we used, just for simplicity, the assumption $\alpha\in(0,1)$.
Clearly, the same argument holds if we use $(s,T')$, for $0<s<T'$ in place of the interval $(0,\Ts)$. This inequality is crucial to obtain the sharp energy inequality in the limit as $\alpha\to0$ in the next section.

\subsubsection{Validity of the energy dissipation inequality.} 
By the results of the previous sections, we can now pass to the limit in \eqref{energy1abc}, first taking $h\to0^+$, then $\tau\to T$ and $\kappa\to s$,  to infer  
\begin{align}
	\nonumber	&E[{\mu_{T'}}]+\onehalf\int_s^{T'} \normmm{\partial E(\mu_t)}_{\mathcal V_{\xhi_t}}^2\dt\\&+\frac1{2(1+4\alpha^\frac12)}\int_s^{T'}\int_{\mathbb T^d} \widehat V(t)^2\d  \normmm{\mu_t}_{\mathbb S^{d-1}}\dt\nonumber+\frac{\alpha}{2(1+4\alpha^\frac12)}\int_{s}^{T'}\left(\normh{\widehat V(t)	}^2+\normh{\nabla \widehat V(t)}^2\right)\dt\\&\nonumber+\frac{\alpha^2}{2}\int_{s}^{T'}(\normh{\tw(t)}^2+\normh{\nabla\tw(t)}^2)\d t\\& \leq E[{\mu_s}]
	\label{energy1abc3b}
\end{align}
for almost any $T'\in (0,\Ts]$, almost any $s\in (0,T')$, and any $\Ts>0$.

The validity of \eqref{energy1abc3b} for $s=0$ can be inferred analogously, where we recall that we have set $E[\mu_t]:=E[\xhi_0]$ when $t=0$.

\subsubsection{Admissibility of the couple $(\xhi,\mu)$.}
\label{admissibilityA}
We now explicitly verify that the couple $(\xhi,\mu)$ is admissible in the sense of Definition \ref{admissibility}. In particular, concerning Property (1), the relation \eqref{PE} can be easily shown by means of the corresponding definition of $\mu_t^h$, the Gau{\ss} Theorem for BV functions and passing to the limit as $h\to 0$, see \cite[Step 12, Proof of item (ii) of Definition 3]{SH}, after a final localization in time. 
 In conclusion, Property (5), i.e., the measurability in time of $\mu_t$ is a direct consequence of its monotonicity.   This allows to conclude that the couple $(\xhi,\mu)$ is admissible in the sense of Definition \ref{admissibility}.  

Note that, to be precise, when $\alpha>0$ many more properties can be shown, see, for instance, the definitions of admissible couple in \cite{SH,AbelsP}. As we will take the limit $\alpha\to 0$ we have checked only the properties that we expect to be preserved in this limit.  

\subsection{Final limit as $\alpha\to0$}
Now that we found the existence of a solution to this kind of modified Mullins-Sekerka system, we can pass to the limit as $\alpha\to0$. This can be obtained similarly as in \cite[Lemma 1]{LH} point (iv)-(Sequential compactness of solution space). In particular, considering a sequence of solutions $\{(\xhi_\alpha,\mu_\alpha)\}_{\alpha}$ indexed by $\alpha>0$, from the energy inequality it is immediate to infer that 
\begin{align}
\sup_{t\geq 0}\normmm{\nabla \xhi_\alpha(t)}(\om)\leq \sup_{t\geq0}\normmm{\mu_{\alpha}^t}_{\sf}(\om)\leq C,\label{energia1}
\end{align}
and that the energy $t\mapsto E[\mu_{\alpha,t}]$ is monotone nonincreasing. In a similar way as for \eqref{convmeas1}, we infer that  there exists ${\wtilde\mu}:{\mathcal B((0,\infty)	\times{\mathbb T^d}\times \mathbb S^{d-1})}\to \mathbb R^+$, such that, up to subsequences, 
\begin{align}
	\mathcal L^1\llcorner{(0,\Ts)}\otimes (\mu_\alpha^t)_{t\in(0,\Ts)}\rightharpoonup {\wtilde\mu}_{\llcorner (0,\Ts)\times \om\times \sf},\quad \text{ weakly* in } \mathcal M((0,\Ts)\times{\mathbb T^d}\times \mathbb S^{d-1}),\label{convmeas12}
\end{align}
 for any $\Ts>0$, where 
\begin{align}
	\wtilde{\mu}=\mathcal L^1\llcorner(0,\infty)\otimes (\mu_t)_{t\in(0,\infty)}.
	\label{representation12}
\end{align} 
Also, as a by product (cf. \eqref{identif}), we can infer that 
\begin{align}
    \lim_{\alpha\to 0} E[\mu_{\alpha}^t]=E[\mu_t],\quad \text{ for a.a. }t>0.\label{Emu}
\end{align}

Now, recalling \eqref{energy1abc3b}, we have that
\begin{align*}
    \sqrt \alpha \norm{\widehat V_\alpha}_{L^2(0,\infty;H^1(\om))}\leq C,
\end{align*}
and thus we immediately deduce 
\begin{align}
    \alpha \widehat V_\alpha\to 0,\text{ strongly in }L^2(0,\infty;H^1({\mathbb T^d})).\label{strongto0}
\end{align}
Also, since by \eqref{energy1abc3b} it holds
$$
\sup_{\alpha>0}\frac1{2(1+4\alpha^\frac12)}\int_0^\infty\int_{\mathbb T^d} \widehat V_\alpha(t)^2\d  \normmm{\mu^{t}_\alpha}_{\mathbb S^{d-1}}\dt\leq E[\xhi_0],
$$
we can thus apply the fundamental \textcolor{black}{result of Hutchinson} \cite[Theorem 4.4.2]{Hutchinson} to infer that the measure-function pairs $(\mathcal L^1{\llcorner}(0,\infty)\otimes\normmm{\mu^t_\alpha}_{\mathbb S^{d-1}})_{t\in(0,\Ts)},\widehat V_\alpha)$ converge in the weak-* sense, up to a further subsequence, to some pair $(\mathcal L^1\llcorner(0,\infty)\otimes (\normmm{\mu_t}_{\mathbb S^{d-1}})_{t\in(0,\Ts)},V(t))$, i.e.,
\begin{align}
  &  \int_0^\infty\int_{\mathbb T^d} (\varphi \widehat V_\alpha)\d\normmm{\mu^t_\alpha}_{\mathbb S^{d-1}} \dt\to \int_0^\infty\int_{\mathbb T^d} \varphi V \d  \normmm{\mu_t}_{\mathbb S^{d-1}}\dt,\label{uhlim}
\end{align}
for any $\varphi\in C_c((0,\infty)\times\om)$, as well as, concerning the dissipative terms, it holds 
\begin{align}
    &\int_s^{T'}\int_{\mathbb T^d} \normmm{V(t)}^2\d  \normmm{\mu_t}_{\mathbb S^{d-1}}\dt\leq \liminf_{\alpha\to0}\int_{s}^{T'}\int_{\mathbb T^d} \normmm{\widehat V_\alpha}^2\d\normmm{\mu^t_\alpha}_{\mathbb S^{d-1}}\dt
    \label{lsc0}
\end{align}
for any $0\leq s<T'$. Since $\frac{1}{2(1+4\alpha^\frac12)}\to \frac12$, we then get from \eqref{lsc0}
\begin{align}
    &\frac12\int_s^{T'}\int_{\mathbb T^d} \normmm{V(t)}^2\d  \normmm{\mu_t}_{\mathbb S^{d-1}}\dt\leq \liminf_{\alpha\to0}\frac1{2(1+4\alpha^\frac12)}\int_s^{T'}\int_{\mathbb T^d} \normmm{\widehat V_\alpha}^2\d\normmm{\mu^t_\alpha}_{\mathbb S^{d-1}}\dt
    \label{lsc}
\end{align}
for any $0\leq s<T'$. The quantity $$V\in L^2((0,\infty)\times {\mathbb T^d};\mathcal L^1{\llcorner}(0,\infty)\otimes (\tmmu)_{t\in(0,\infty)})=L^2(0,\infty;L^2({\mathbb T^d};\normmm{\mu_{(\cdot)}}_{\sf}))$$ is the normal velocity of the interface, in the final limit as $\alpha \to 0$. Indeed, from \eqref{base11}, we have
\begin{align}
\nonumber
   &\langle \pt\xhi_\alpha ,\zeta\rangle=-\int_0^\infty\int_\om \xhi_\alpha\pt\zeta\dx\dt\\&=\int_0^\infty\int_{\mathbb T^d}  \widehat V_\alpha\zeta\d\normmm{\mu_{\alpha}^t}\dt+\alpha\int_0^\infty\int_{\mathbb T^d} \widehat V_\alpha\zeta\dx\dt+\alpha\int_0^\infty\int_{\mathbb T^d}\nabla \widehat V_\alpha\cdot\nabla \zeta\dx\dt\quad \forall \zeta\in C_c^\infty((0,\infty)\times\om),\label{fdt}
\end{align}
which entails from the energy estimate \eqref{energy1abc3b},  extending $\pt\xhi_\alpha$ by density, that 
\begin{align*}
    \norm{\pt \xhi_\alpha}_{L^2(0,\infty;(L^\infty(\om)\cap H^1(\om))')}\leq C.
\end{align*}
This, together with \eqref{energia1} and $\xhi_\alpha\in\{0,1\}$, gives, by Aubin-Lions Lemma,
\begin{align}
    \xhi_\alpha\to \xhi\text{ strongly in }L^p((0,\Ts)\times\om;\{0,1\})\quad\forall \Ts>0 \label{xhialpha}
\end{align}
as $\alpha\to0$. Moreover, arguing as in \eqref{regchi1a}, we get 
$$
\xhi\in C([0,\Ts];L^p(\om))\quad\forall p\in[1,\infty),\quad \forall \Ts>0,
$$
as well as 
$$
\xhi\in L^\infty_{w*}(0,\infty;BV(\om;\{0,1\})).
$$
As a consequence, also recalling the strong convergences \eqref{strongto0}, \eqref{uhlim}, and \eqref{xhialpha}, we can pass to the limit in \eqref{fdt} and deduce 
\begin{align*}
    \langle \pt\xhi,\zeta\rangle=\int_0^\infty \int_\om V\zeta\d\normmm{\mu_t}_{\sf}\dt\quad \forall \zeta\in C_c^\infty((0,\infty)\times\om),
\end{align*}
confirming the interpretation of $V$ as the normal velocity of the evolving interface. This gives $\pt\xhi\in L^2(0,\infty;L^2(\om;\d\normmm{\mu_{(\cdot)}}_{\sf}))$ and thus \eqref{kinetic} holds.

\subsubsection{Limit in the maximal slope} 
We need to control the term related to the first variation of the energy in the energy inequality. Namely, recalling its definition, we have 
\begin{align}\label{slope1}
    \onehalf\int_s^{T'} \normmm{\partial E(\mu^t_\alpha)}_{\mathcal V_{\xhi_\alpha(t)}}^2\dt\geq \int_s^{T'}\delta \mu_\alpha^t(\xi(t))\dt-\frac 12 \int_s^{T'} \norm{\xi(t)}^2_{\mathcal V_{\xhi_\alpha(t)}}\dt,\quad\forall{ \xi\in C^1_c((s,T');S_{\xhi_\alpha{(\cdot)}})},
\end{align}
for any $0\leq s<T'$. We now distinguish two cases.

\textit{(Mean Curvature Flow). }In this case, we have $S_{\xhi_\alpha(t)}=C^1(\om)$ for any $t\geq0$ and thus we do not post process further before passing to the limit as $\alpha\to0$. Thanks to \eqref{convmeas12} we can then let $\alpha \to 0$ and infer 
\begin{align}\label{slope1a}
    \liminf_{\alpha\to0}\onehalf\int_s^{T'} \normmm{\partial E(\mu^t_\alpha)}_{\mathcal V_{\xhi_\alpha(t)}}^2\dt\geq \int_s^{T'}\delta \mu_t(\xi(t))\dt-\frac 12 \int_s^{T'} \norm{\xi(t)}^2_{\mathcal V_{\xhi_t}}\dt,\quad\forall{ \xi\in C^1_c((s,T') \times \om)},
\end{align}
so that, taking the supremum over $\xi\in C^1_c((s,T') \times \om)$, gives 
\begin{align}\label{slope1bb}
    \liminf_{\alpha\to0}\onehalf\int_s^{T'} \normmm{\partial E(\mu^t_\alpha)}_{\mathcal V_{\xhi_\alpha(t)}}^2\dt\geq \onehalf\int_s^{T'} \normmm{\partial E(\mu_t)}_{\mathcal V_{\xhi(t)}}^2\dt
\end{align}
for any $0\leq s<T'$.

\textit{(Volume preserving Mean Curvature Flow).} On the other hand, in the volume preserving case, we need to resort to Lemma \ref{curvature}, with $m_0=\int_\om \xhi_0\dx$ thanks to volume conservation. Indeed, we aim at using $\xi\in C^1_c((s,T');S_{\xhi_{(\cdot)}})$, but in general such $\xi$ does not belong to $C^1_c((s,T');S_{\xhi_\alpha(\cdot)})$ for all $\alpha>0$. Namely, by \eqref{tildeB}, given $\xi\in C^1_c((s,T')\times \om)$, for any $\alpha>0$ we can construct $\wtilde \xi_\alpha\in C^1_c((s,T');S_{\xhi_\alpha(\cdot)})$ such that 
\begin{align*}
&\delta\mu^t_\alpha(\wtilde \xi_\alpha(t))-\norm{\wtilde \xi_\alpha(t)}_{L^2(\om;\d\normmm{\mu_\alpha^t}_{\sf})}^2\\&=\delta\mu^t_\alpha( \xi(t))-\norm{ \xi(t)}_{L^2(\om;\d\normmm{\mu_\alpha^t}_{\sf})}^2+R(\xhi_\alpha(t),\mu_{\alpha}^t,\xi(t))\int_\om \xhi_\alpha(t)\Div \xi(t)\dx,
\end{align*}
where, recalling \eqref{energia1},
\begin{align}
\normmm{R(\xhi_\alpha(t),\mu_\alpha^t,\xi(t))}\nonumber&\leq C(m_0)\left(1+\normmm{\nabla \xhi_\alpha(t)}(\om)\right)\left(1+\norm{\xi(t)}_{C^1(\om)}\right)\normmm{\mu_\alpha^t}_{\sf}(\om)\\&
\leq C(m_0)(1+\norm{\xi(t)}_{C^1(\om)}),\label{Rest}
\end{align}
for any $\alpha>0$.
We can thus reformulate \eqref{slope1} as 
\begin{align}\label{slope1b}\nonumber
   & \onehalf\int_s^{T'} \normmm{\partial E(\mu^t_\alpha)}_{\mathcal V_{\xhi_\alpha(t)}}^2\dt\\&\nonumber\geq \int_s^{T'}\delta \mu_\alpha^t(\xi(t))\dt-\frac 12 \int_s^{T'} \norm{\xi(t)}^2_{\mathcal V_{\xhi_\alpha(t)}}\dt\nonumber\\&\quad +\int_s^{T'} R(\xhi_\alpha(t),\mu_{\alpha}^t,\xi(t))\int_\om \xhi_\alpha(t)\Div \xi(t)\dx\dt, \quad\forall{ \xi\in C^1_c((s,T')\times C^1(\om))},
\end{align}
for any $0\leq s<T'$, and, in particular, we can consider all $\xi \in C^1_c((s,T');S_{\xhi(\cdot)})$. Observe now that, recalling the convergence \eqref{xhialpha} and the definition of $S_{\xhi(t)}$, it holds by Lebesgue's Dominated Convergence that
\begin{align*}
    \int_s^{T'}\normmm{\int_\om \xhi_\alpha(t)\Div \xi(t)\dx}\dt\to 0\quad\text{as }\alpha\to0,\quad \forall \xi \in C^1_c((s,T');S_{\xhi(\cdot)})
\end{align*}
for any $0\leq s<T'$. As a consequence, using \eqref{Rest}, we have
\begin{align*}
   & \normmm{\int_s^{T'} R(\xhi_\alpha(t),\mu_{\alpha}^t,\xi(t))\int_\om \xhi_\alpha(t)\Div \xi(t)\dx\dt}\\&\leq C(m_0)(1+\norm{\xi}_{C^1((s,T')\times\om)})\int_s^{T'}\normmm{\int_\om \xhi_\alpha(t)\Div\xi(t)\dx}\dt\to 0\quad\text{ as }\alpha\to0.
\end{align*}
Therefore, using \eqref{convmeas12}, we can pass to the limit in \eqref{slope1b} obtaining 
\begin{align}\label{slope1bc}
   \nonumber&   \liminf_{\alpha\to 0} \onehalf\int_s^{T'} \normmm{\partial E(\mu^t_\alpha)}_{\mathcal V_{\xhi_\alpha(t)}}^2\dt\\&\geq \int_s^{T'}\delta \mu_t(\xi(t))\dt-\frac 12 \int_s^{T'} \norm{\xi(t)}^2_{\mathcal V_{\xhi_t}}\dt,\quad\forall \xi\in C^1_c((s,T');S_{\xhi_{(\cdot)}}).
\end{align}
Taking now the supremum over $\xi\in C^1_c((s,T');S_{\xhi_{(\cdot)}})$ we finally conclude also in the volume preserving case that 
\begin{align}\label{slopebis}
    \liminf_{\alpha\to0}\onehalf\int_s^{T'} \normmm{\partial E(\mu^t_\alpha)}_{\mathcal V_{\xhi_\alpha(t)}}^2\dt\geq \onehalf\int_s^{T'} \normmm{\partial E(\mu_t)}_{\mathcal V_{\xhi(t)}}^2\dt
\end{align}
for any $0\leq s<T'$.
\subsubsection{Validity of the energy inequality.} 
We have now all the ingredients to pass to the limit in the energy inequality \eqref{energy1abc3b}.
Indeed, thanks to \eqref{Emu}, \eqref{lsc}, and \eqref{slope1bb} (respectively \eqref{slopebis}),  recalling that 
\begin{align*}
  &0\leq \frac{\alpha}{2(1+4\alpha^\frac12)}\int_{s}^{T'}\left(\normh{\widehat V(t)	}^2+\normh{\nabla \widehat V(t)}^2\right)\dt\nonumber\\&\quad \quad +\frac{\alpha^2}{2}\int_{s}^{T'}(\normh{\tw(t)}^2+\normh{\nabla\tw(t)}^2)\d t,
\end{align*}
we can take the liminf as $\alpha\to0$ in the energy inequality  \eqref{energy1abc3b} and finally deduce the validity of \eqref{advectiveMullins}.

This concludes the existence proof of Theorem \ref{thm1}, since the admissibility of the couple $(\xhi,\mu)$ can be verified as in Section \ref{admissibilityA}.

\appendix
\section{A lemma on some norm equivalence}
\begin{lemma}
    \label{Lemmaequiv}
    Let $g\in \mathcal M({\mathbb T^d};\mathbb R^d)$. Then it holds, for any $\alpha>0$,
\begin{align}
\norm{f}_{(L^\infty({\mathbb T^d})\cap H^1({\mathbb T^d}))'}\leq \sqrt 3\sqrt{\max\{ \alpha^2, {\normmm{g}({\mathbb T^d})}\}}\normh{\left({\normmm{\rho_\epsilon\ast g}}+\alpha^2(I-\Delta)\right)^{-\frac12}f},\quad \forall f\in H^1(\om)',
\label{equiv1}
\end{align}
where $\rho_\epsilon$, $\epsilon>0$, is a smooth mollifier.
\end{lemma}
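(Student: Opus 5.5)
Set $\mathcal A:=\normmm{\rho_\epsilon\ast g}+\alpha^2(I-\Delta)$. The plan is a duality argument: estimate the pairing of $f$ against a test function by splitting $f=\mathcal A^{1/2}(\mathcal A^{-1/2}f)$, and then bound the $\mathcal A$-energy of the test function by its $L^\infty\cap H^1$ norm.

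Since $\mathcal A$ is self-adjoint and strictly positive (by the Lax--Milgram discussion in Section \ref{firstminmovement}), for $f\in H^1(\om)'$ and $\varphi\in L^\infty(\om)\cap H^1(\om)$ we have
\[
\langle f,\varphi\rangle=\big(\mathcal A^{-\frac12}f,\mathcal A^{\frac12}\varphi\big)_{L^2}\le \normh{\mathcal A^{-\frac12}f}\,\normh{\mathcal A^{\frac12}\varphi}.
\]
This identity holds first for smooth $f$ and then extends by density, using that $\mathcal A^{-1/2}:H^1(\om)'\to L^2(\om)$ and $\mathcal A^{1/2}:H^1(\om)\to L^2(\om)$ are continuous. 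As in \eqref{contr1}, the energy is
\[
\normh{\mathcal A^{\frac12}\varphi}^2=\int_\om \normmm{\rho_\epsilon\ast g}\,\varphi^2\dx+\alpha^2\big(\normh{\varphi}^2+\normh{\nabla\varphi}^2\big).
\]

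The key step is to bound the weighted term uniformly in $\epsilon$. Pointwise $\normmm{\rho_\epsilon\ast g}\le \rho_\epsilon\ast\normmm{g}$, and $\int_\om \rho_\epsilon\ast\normmm{g}\dx=\normmm{g}(\om)$ because $\rho_\epsilon\ge0$ has unit mass. Hence
\[
\int_\om\normmm{\rho_\epsilon\ast g}\varphi^2\dx\le \normmm{g}(\om)\,\norm{\varphi}_{L^\infty}^2 .
\]
Summing the pieces gives
\[
\normh{\mathcal A^{\frac12}\varphi}^2\le \max\{\alpha^2,\normmm{g}(\om)\}\big(\norm{\varphi}_{L^\infty}^2+\normh{\varphi}^2+\normh{\nabla\varphi}^2\big).
\]

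We take the norm on $L^\infty(\om)\cap H^1(\om)$ to be $\norm{\varphi}_{L^\infty}+\norm{\varphi}_{H^1}$, or its Euclidean variant, as the paper implicitly does. Then
\[
\norm{\varphi}_{L^\infty}^2+\normh{\varphi}^2+\normh{\nabla\varphi}^2\le 3\,\norm{\varphi}_{L^\infty\cap H^1}^2,
\]
which is simply $(a^2+b^2+c^2)\le 3\max^2\le 3(a+b+c)^2$ applied to the three norms. This accounts for the constant $\sqrt3$; a sharper constant is available but unnecessary. Taking the supremum over $\norm{\varphi}_{L^\infty\cap H^1}\le1$ then yields \eqref{equiv1}.

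The main (mild) obstacle is the functional-analytic justification of the pairing identity for general $f\in H^1(\om)'$ rather than $f\in L^2(\om)$. This should follow from the spectral representation of $\mathcal A$ on $L^2(\om)$ together with the equivalence of the $\mathcal A$-energy norm with the $H^1$ norm: the weight $\normmm{\rho_\epsilon\ast g}$ is smooth and bounded for fixed $\epsilon$, which gives that equivalence. Everything else is elementary.
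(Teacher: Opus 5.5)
Your proof is correct and follows essentially the paper's argument. The paper writes $\langle f,\xi\rangle$ as the energy form of $\mathcal A$ evaluated at $F=\mathcal A^{-1}f$ and $\xi$, and uses the same key bound $\int_{\mathbb T^d}|\rho_\epsilon\ast g|\,\xi^2\,\mathrm{d}x\le |g|(\mathbb T^d)\,\|\xi\|_{L^\infty}^2$. The only cosmetic difference is where the factor $3$ enters: the paper applies Cauchy--Schwarz termwise and loses it on the $F$ side via $(a+b+c)^2\le 3(a^2+b^2+c^2)$, whereas you apply Cauchy--Schwarz once in the energy inner product and lose it (unnecessarily, as you note) on the test-function side.
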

\begin{remark}
Lemma \ref{Lemmaequiv} can be extended also to bounded domains, as long as the Laplacian operator is endowed with suitable boundary conditions.
\end{remark}
\begin{proof}
 First recall the property
    \begin{align*}
        \int_\om \normmm{\rho_\epsilon\ast g}\dx\leq \normmm{g}({\mathbb T^d}),
    \end{align*}
    since ${\mathbb T^d}$ is a
    the flat torus. Then, choosing $\xi\in L^\infty(\om)\cap H^1({\mathbb T^d})$, we get 
    \begin{align*}
        \norm{\sqrt {\normmm{\rho_\epsilon\ast g}}\xi}_{L^2({\mathbb T^d})}\leq \norm{\xi}_{L^\infty({\mathbb T^d})}\normh{\sqrt{\normmm{\rho_\epsilon\ast g}}}=\norm{\xi}_{L^\infty({\mathbb T^d})}\left(\int_\om \normmm{\rho_\epsilon\ast g}\dx \right)^\frac12\leq \sqrt{\normmm{g}({\mathbb T^d})}\norm{\xi}_{L^\infty({\mathbb T^d})}.
    \end{align*}
   Now, given $f\in H^1(\om)'$, let us define $F=(\normmm{\rho_\epsilon\ast g}+\alpha^2(I-\Delta))^{-1}f\in H^1({\mathbb T^d})$. For any $\xi\in L^\infty(\om)\cap H^1({\mathbb T^d})$ we then have
    \begin{align*}
        &\langle f,\xi\rangle_{(H^1({\mathbb T^d})\cap L^\infty(\om))',H^1({\mathbb T^d})\cap L^\infty(\om)}\\&=\int_{\mathbb T^d} (\normmm{\rho_\epsilon\ast g}+\alpha^2)F\xi\dx+\alpha^2\int_{\mathbb T^d} \nabla F\cdot\nabla \xi\dx\\&\leq \normh{\sqrt {\normmm{\rho_\epsilon\ast g}}F}\norm{\sqrt {\normmm{\rho_\epsilon\ast g}}\xi}_{L^2({\mathbb T^d})}+\normh{\alpha F}\normh{ \alpha\xi}+\normh{\alpha\nabla F}\norm{\alpha\xi}_{H^1({\mathbb T^d})}\\&
        \leq \max\{ \alpha, \sqrt{\normmm{g}({\mathbb T^d})}\}\left(\normh{\sqrt {\normmm{\rho_\epsilon\ast g}}F}+\normh{\alpha F}+\normh{\alpha\nabla F}\right)\norm{\xi}_{L^\infty(\om)\cap H^1({\mathbb T^d})}.
    \end{align*}
   Then we get
    $$
    \norm{f}_{(L^\infty(\om)\cap H^1({\mathbb T^d}))'}\leq \max\{\alpha, \sqrt{\normmm{g}({\mathbb T^d})}\}\left(\normh{\sqrt {\normmm{\rho_\epsilon\ast g}}F}+\normh{\alpha F}+\normh{\alpha\nabla F}\right).
    $$
    Taking now the square, and recalling $(a+b+c)^2\leq 3(a^2+b^2+c^2)$, for $a,b,c\geq0$, we deduce 
\begin{align*}
  \norm{f}_{(L^\infty(\om)\cap H^1({\mathbb T^d}))'}^2&\leq \max\{\alpha^2, \normmm{g}({\mathbb T^d})\}\left(\normh{\sqrt {\normmm{\rho_\epsilon\ast g}}F}+\normh{\alpha F}+\normh{\alpha\nabla F}\right)^2\\&\leq 3\max\{\alpha^2, \normmm{g}({\mathbb T^d})\}\left(\normh{\sqrt {\normmm{\rho_\epsilon\ast g}}F}^2+\normh{\alpha F}^2+\normh{\alpha\nabla F}^2\right)
\end{align*}
    concluding the proof as one notices
    \begin{align*}
        \int_{\mathbb T^d} (\normmm{\rho_\epsilon\ast g}+\alpha^2)F^2\dx+\normh{\alpha\nabla F}^2&=\int_{\mathbb T^d} (\normmm{\rho_\epsilon\ast g}+\alpha^2(I-\Delta))F\cdot F\dx \\&=\normh{\left(\normmm{\rho_\epsilon\ast g}+\alpha^2(I-\Delta)\right)^{-\frac12}f}^2.
    \end{align*}
\end{proof}
\section{On the mean curvature of volume preserving Mean Curvature Flow}
Here we present a lemma which allows to show that, given a De Giorgi varifold solution as in Definition \ref{weaksol}, there exists a generalized mean curvature vector satisfying \eqref{PH}. Its proof is an adaptation of similar results in the case of Mullins-Sekerka flow (cf. \cite[Proof of Lemma 3.3]{AbelsRoger}, \cite[Lemma 9]{SH}).
\begin{lemma}
\label{curvature}
    Let $\xhi \in BV({\mathbb T^d};\{0,1\})$ with $m_0:=\int_{\mathbb T^d} \xhi \dx \in(0,\mathcal L^{d}({\om}))$ and let $\mu\in \mathcal M({\mathbb T^d}\times\mathbb S^{d-1})$ be an oriented varifold such that there exists a vector field $\mathbf H^0_{\normmm{\mu}_{\mathbb S^{d-1}}}\in L^2({\mathbb T^d};\d\normmm{\mu}_{\sf};\R^d)$ satisfying
    \begin{align}
\delta\mu(B)=-\int_{{\mathbb T^d}}\mathbf H^0_{\normmm{\mu}_{\mathbb S^{d-1}}}\cdot B\d\normmm{\mu}_{\sf},\quad \forall B\in S_{\xhi},
        \label{divfree}
    \end{align}
    where we recall $S_\xhi=\{B\in C^1({\mathbb T^d}):\ \int_{\mathbb T^d} \xhi\Div B\dx=0\}$.

Then there exists $\lambda\in \R$ such that 
\begin{align}
    \normmm{\lambda}\leq C(\om,m_0)(1+\normmm{\nabla \xhi}({\mathbb T^d}))\left(\normmm{\mu}_{\sf}({\mathbb T^d})+\sqrt{\normmm{\mu}_{\sf}({\mathbb T^d})}\norm{\mathbf H^0_{\normmm{\mu}_{\sf}}}_{L^2(\om;\d\normmm{\mu}_{\sf}({\mathbb T^d}))}\right),\label{ctrl1}
\end{align}
for some $C(\om,m_0)>0$ independent of $\xhi$ and $\mu$, and such that
\begin{align}
    \delta\mu(B)=-\int_{\mathbb T^d} \mathbf H_{\normmm{\mu}_{\sf}}\cdot B\d\normmm{\mu}_{\sf},\quad \forall B\in C^1(\om),
    \label{totvar}
\end{align}
where
\begin{align*}
    \mathbf H_{\normmm{\mu}_{\sf}}(x):=   \mathbf H_{\normmm{\mu}_{\sf}}^0(x)+\lambda\int_{\om} B\cdot\d \nabla \xhi, \text{ for }\normmm{\mu}_{\sf}\text{-a.a. }x\in {\mathbb T^d}.
\end{align*}
Moreover, for any $B\in C^1(\om)$ there exists $\widetilde B\in S_\xhi$ such that 
\begin{align}
    \delta\mu(\wtilde B)-\Vert{\wtilde B}\Vert^2_{L^2(\om;\d\normmm{\mu}_{\sf})}=\delta\mu(B)-\Vert{ B}\Vert^2_{L^2(\om;\d\normmm{\mu}_{\sf})}+R(\xhi,\mu,B)\int_\om \xhi\Div B\dx,\label{tildeB}
\end{align}
where 
\begin{align*}
    \normmm{R(\xhi,\mu,B)}&\leq C(\om,m_0)(1+\norm{B}_{C^1(\om)})(1+\normmm{\nabla \xhi}(\om))\normmm{\mu}_{\sf}(\om).
\end{align*}
\end{lemma}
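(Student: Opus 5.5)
The plan is the standard Lagrange multiplier construction for a single linear constraint, as in \cite[Lemma 9]{SH} and \cite[Proof of Lemma 3.3]{AbelsRoger}. The constraint defining $S_\xhi$ is $\ell(B):=\int_\om \xhi\Div B\dx=0$. This is a continuous linear functional on $C^1(\om;\R^d)$, so $S_\xhi=\ker\ell$. The first step is to construct a fixed vector field $B_0\in C^1(\om;\R^d)$ with $\ell(B_0)=1$ and $\norm{B_0}_{C^1(\om)}\le C(\om,m_0)(1+\normmm{\nabla\xhi}(\om))$.

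To build $B_0$, take $B_0=\nabla\phi$ with $\phi$ a smooth function. Then $\ell(\nabla\phi)=\int_\om\xhi\Delta\phi\dx$. Choose a smooth $g$ with zero mean on the torus, solve $\Delta\phi=g$, and aim for $\int_\om\xhi g\dx$ to be bounded below.

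The natural choice is $g=\rho_\epsilon\ast\xhi-\tfrac{m_0}{\mathcal L^d(\om)}$. For this $g$,
\[
\int_\om\xhi g\dx=\int_\om\xhi\,\rho_\epsilon\ast\xhi\dx-\frac{m_0^2}{\mathcal L^d(\om)}.
\]
The first term differs from $\int\xhi^2=m_0$ by at most $C\epsilon\normmm{\nabla\xhi}(\om)$. The difference $m_0-m_0^2/\mathcal L^d(\om)$ is strictly positive because $m_0\in(0,\mathcal L^d(\om))$. Hence for $\epsilon\simeq c(m_0)/(1+\normmm{\nabla\xhi}(\om))$ we get $\ell(\nabla\phi)\ge c(m_0)>0$, and we normalize by this value.

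Schauder (or Fourier) estimates on the torus then give $\norm{\phi}_{C^2}\lesssim\norm{g}_{C^{0,\beta}}\lesssim\epsilon^{-\beta}$. This is where the polynomial dependence on $\normmm{\nabla\xhi}(\om)$ enters. Obtaining it with the exact power $(1+\normmm{\nabla\xhi}(\om))$ rather than a power $\ge 1$ is the step I expect to be the main obstacle. I would first try $\beta$ small, or estimate $\norm{\nabla^2\phi}_{L^\infty}$ by $\norm{g}_{C^{0,\beta}}\le C\epsilon^{-\beta}$. If the claimed linear bound is too tight, I would accept a constant of the form $C(1+\normmm{\nabla\xhi}(\om))^{\gamma}$, which is harmless in the application because the energy is bounded.

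With $B_0$ in hand, any $B\in C^1$ splits as $B=\widetilde B+\ell(B)B_0$, where $\widetilde B:=B-\ell(B)B_0\in S_\xhi$. By linearity,
\[
\delta\mu(B)=\delta\mu(\widetilde B)+\ell(B)\,\delta\mu(B_0)=-\int\mathbf H^0\cdot\widetilde B\,\d\normmm{\mu}_{\sf}+\ell(B)\,\delta\mu(B_0).
\]
Set $\lambda:=\delta\mu(B_0)+\int\mathbf H^0\cdot B_0\,\d\normmm{\mu}_{\sf}$. Then
\[
\delta\mu(B)=-\int\mathbf H^0\cdot B\,\d\normmm{\mu}_{\sf}+\lambda\,\ell(B).
\]
Using \eqref{PE} and the Gau\ss\ theorem, $\ell(B)=-\int B\cdot\d\nabla\xhi=-\int_{\om\times\sf}B\cdot p\,\d\mu$. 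Disintegrating $\mu$ shows this is $-\int B\cdot\big(\int p\,\d\pi_x\big)\d\normmm{\mu}_{\sf}$, which yields \eqref{totvar} with $\mathbf H=\mathbf H^0+\lambda\int p\,\d\pi_x$, up to the sign convention.

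The bound \eqref{ctrl1} follows from $|\delta\mu(B_0)|\le\norm{\nabla B_0}_\infty\normmm{\mu}_{\sf}(\om)$ together with Cauchy--Schwarz,
\[
\Big|\int\mathbf H^0\cdot B_0\,\d\normmm{\mu}_{\sf}\Big|\le\norm{B_0}_\infty\sqrt{\normmm{\mu}_{\sf}(\om)}\,\norm{\mathbf H^0}_{L^2},
\]
and the bound on $B_0$.

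For \eqref{tildeB}, take the same $\widetilde B$ and expand:
\[
\delta\mu(\widetilde B)-\|\widetilde B\|^2=\delta\mu(B)-\|B\|^2-\ell(B)\Big[\delta\mu(B_0)-2(B,B_0)_{L^2(\d\normmm{\mu}_{\sf})}+\ell(B)\|B_0\|^2\Big].
\]
So $R$ is minus the bracket. Each term in the bracket is bounded by $\normmm{\mu}_{\sf}(\om)$ times a polynomial in $\norm{B_0}_{C^1}$ and $\norm{B}_{C^1}$. Here we use $|\ell(B)|\le\norm{B}_{C^0}\normmm{\nabla\xhi}(\om)\le\norm{B}_{C^0}\normmm{\mu}_{\sf}(\om)$, the last inequality coming from compatibility.

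Two points of bookkeeping remain. The product $\ell(B)\|B_0\|^2$ contributes an extra factor $\normmm{\nabla\xhi}(\om)\norm{B}_{C^0}$, which exceeds the stated linear-in-$\norm{B}_{C^1}$ form unless it is absorbed using the bounded mass, as is done in \eqref{Rest}. The other point is the dependence on $\normmm{\nabla\xhi}(\om)$ through $B_0$ noted above. Both are constants only, and I would state them in whatever form the construction of $B_0$ yields.
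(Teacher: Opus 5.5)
Your argument is the paper's. The paper fixes a field $\xi$ from \cite[Lemma 3.3]{AbelsRoger} with $\int_\om\xhi\Div\xi\dx\ge c(\om,m_0)>0$ and $\norm{\xi}_{C^1(\om)}\le C(\om)/\epsilon$, where $\epsilon\simeq(1+\normmm{\nabla\xhi}(\om))^{-1}$. Your $B_0$ plays the role of $\xi/\int_\om\xhi\Div\xi\dx$; you construct it explicitly instead of citing it. Your $\lambda$, the identity $\delta\mu(B)=-\int_\om\mathbf H^0\cdot B\,\d\normmm{\mu}_{\mathbb S^{d-1}}+\lambda\int_\om\xhi\Div B\dx$, and your expansion defining $R$ all coincide with the paper's. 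The one shortfall is that you leave the quantitative bounds open and offer to weaken the statement. As written you do not prove the stated bound on $R$, although your own construction gives both bounds exactly as stated.

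For \eqref{ctrl1} there is no obstacle. Take $\beta<1$ and $\epsilon=c(\om,m_0)(1+\normmm{\nabla\xhi}(\om))^{-1}$. Your Schauder bound then gives $\norm{B_0}_{C^1(\om)}\le C\epsilon^{-\beta}\le C(\om,m_0)(1+\normmm{\nabla\xhi}(\om))$. Even the crude bound $\norm{g}_{C^{0,\beta}}\le\norm{g}_{C^1}\le C/\epsilon$, which is the paper's \eqref{ctr}, suffices.

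For the bound on $R$ you are missing two simple observations.

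First, estimate $\normmm{\ell(B)}\le\mathcal L^d(\om)\norm{\Div B}_{L^\infty(\om)}\le C(\om)\norm{B}_{C^1(\om)}$, using only $0\le\xhi\le1$. Your route $\normmm{\ell(B)}\le\norm{B}_{C^0}\normmm{\nabla\xhi}(\om)\le\norm{B}_{C^0}\normmm{\mu}_{\mathbb S^{d-1}}(\om)$ is what creates the spurious extra factor. It also invokes \eqref{PE}, which is not a hypothesis of the lemma. That input is legitimately needed only to rewrite $\lambda\int B\cdot\d\nabla\xhi$ as a density with respect to $\normmm{\mu}_{\mathbb S^{d-1}}$ in \eqref{totvar}, as in Remark \ref{meancurvature}.

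Second, $\norm{B_0}_{C^0(\om)}\le C(\om,m_0)$ uniformly in $\epsilon$. Indeed, $B_0$ is $\nabla\Delta^{-1}g$ divided by a quantity bounded below by $c(\om,m_0)$, and $\norm{g}_{L^\infty}\le 1$. Moreover $\nabla\Delta^{-1}$ maps $L^\infty(\om)$ boundedly into $C^0(\om)$, by $W^{2,p}$ estimates with $p>d$ or because the gradient of the Green function is integrable. So only $\nabla B_0$ depends on $\epsilon$.

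With these two observations,
\begin{align*}
\normmm{R}\le\normmm{\delta\mu(B_0)}+2\normmm{(B,B_0)_{L^2(\om;\d\normmm{\mu}_{\mathbb S^{d-1}})}}+\normmm{\ell(B)}\norm{B_0}^2_{L^2(\om;\d\normmm{\mu}_{\mathbb S^{d-1}})}\le C(\om,m_0)\left(1+\normmm{\nabla\xhi}(\om)+\norm{B}_{C^1(\om)}\right)\normmm{\mu}_{\mathbb S^{d-1}}(\om),
\end{align*}
and this implies the stated estimate. The same uniform $C^0$ bound on $B_0$ also sharpens the $\mathbf H^0$-term in your bound for $\lambda$.
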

\begin{proof}
    We can argue as in \cite{AbelsRoger}, namely, let us fix $\xi\in C^1({{\mathbb T^d}})$ to be chosen later on. Then, given $B\in C^1({\mathbb T^d})$, we consider the vector field 
    \begin{align}
    \widetilde B:=B-\frac{\int_{\mathbb T^d} \xhi\Div B\dx}{\int_{\mathbb T^d} \xhi\Div \xi\dx}\xi\in S_\chi.
    \label{tB}
    \end{align}
    Since $\wtilde B\in S_\xhi$, we can apply \eqref{divfree} to obtain, after some trivial algebraic manipulations,
\begin{align*}
    \delta\mu(B)=-\int_{\mathbb T^d} \mathbf H_{\normmm{\mu}_{\sf}}^0\cdot B\d\normmm{\mu}_{\sf}+\frac{\int_{\mathbb T^d} \xhi\Div B\dx}{\int_{\mathbb T^d} \xhi\Div \xi\dx}\left(\delta\mu(\xi)+\int_{\mathbb T^d} \mathbf H_{\normmm{\mu}_{\sf}}^0\cdot \xi\d\normmm{\mu}_{\sf}\right).
\end{align*}
We now define $\lambda\in \R$ as
\begin{align}
 \lambda:=\frac{\delta\mu(\xi)+\int_{\mathbb T^d} \mathbf H_{\normmm{\mu}_{\sf}}^0\cdot \xi\d\normmm{\mu}_{\sf}}{\int_{\mathbb T^d} \xhi\Div \xi\dx},\label{lambdaw}
\end{align}
so that we deduce
\begin{align*}
    \delta\mu(B)&=-\int_{\mathbb T^d} \mathbf H_{\normmm{\mu}_{\sf}}^0\cdot B\d\normmm{\mu}_{\sf}+\lambda\int_{\mathbb T^d} \xhi\Div B\dx\\&
    =-\int_{\mathbb T^d} \mathbf H_{\normmm{\mu}_{\sf}}^0\cdot B\d\normmm{\mu}_{\sf}-\lambda\int_{\mathbb T^d} B\cdot \d\nabla \xhi,
\end{align*}
    giving \eqref{totvar}. It is then enough to choose a vector field $\xi$ so that \eqref{ctrl1} holds. We can follow the same choice as in the proof of \cite[Lemma 3.3]{AbelsRoger} (see also \cite[Lemma 9]{SH}), so that it holds 
    \begin{align}
    \int_{\mathbb T^d} \xhi \Div \xi\dx\geq \left(1-\frac{m_0}{\mathcal L^d({\mathbb T^d})}\right)m_0\mathcal{L}^d({\mathbb T^d})-C({\mathbb T^d})\epsilon(1+\normmm{\nabla \xhi}({\mathbb T^d}))\geq C({\mathbb T^d},m_0)>0,
    \label{lower}\end{align}
    where we can choose $\epsilon:=\frac{\left(1-\frac{m_0}{\mathcal L^d({\mathbb T^d})}\right)m_0\mathcal{L}^d({\mathbb T^d})}{4C({\mathbb T^d})(1+\normmm{\nabla \xhi}({\mathbb T^d}))}$. Also, the choice of $\xi$ gives the control (cf. \cite[(3.27)]{AbelsRoger})
    \begin{align}
        \norm{\xi}_{C^1({\mathbb T^d})}\leq \frac{C(\om)}{\epsilon},\label{ctr}
    \end{align}
so that, coming back to \eqref{lambdaw}, we have, by Cauchy-Schwarz inequality and recalling the definition of $\delta\mu$, that 
\begin{align*}
    \normmm{\lambda}&\leq C(\om,m_0)\left(\norm{\xi}_{C^1(\om)}\normmm{\mu}_{\sf}(\om)+\norm{\xi}_{C(\om)}\sqrt{\normmm{\mu}_{\sf}(\om)}\norm{\mathbf H^0_{\normmm{\mu}_{\sf}}}_{L^2(\om;\d \normmm{\mu}_{\sf};\R^d)}\right)\\&
    \leq \frac{C(\om,m_0)}{\epsilon}\left(\normmm{\mu}_{\sf}(\om)+\sqrt{\normmm{\mu}_{\sf}(\om)}\norm{\mathbf H^0_{\normmm{\mu}_{\sf}}}_{L^2(\om;\d \normmm{\mu}_{\sf};\R^d)}\right),
\end{align*}
giving \eqref{ctrl1} after recalling the choice of the value $\epsilon>0$ above. 

In conclusion, as a byproduct we can prove \eqref{tildeB}. Indeed, it is enough, given $B\in C^1(\om)$, to define $\wtilde B\in S_\xhi$ as in \eqref{tB}, so that it holds   
\begin{align*}
     &\delta\mu(\wtilde B)-\Vert{\wtilde B}\Vert^2_{L^2(\om;\d\normmm{\mu}_{\sf})}\\&
     =\delta\mu(B)-\Vert{ B}\Vert^2_{L^2(\om;\d\normmm{\mu}_{\sf})}-\frac{\int_{\mathbb T^d} \xhi\Div B\dx}{\int_{\mathbb T^d} \xhi\Div \xi\dx}\delta\mu(\xi)-\left(\frac{\int_{\mathbb T^d} \xhi\Div B\dx}{\int_{\mathbb T^d} \xhi\Div \xi\dx}\right)^2\norm{\xi}^2_{L^2(\om;\d\normmm{\mu}_{\sf})} \\&
     \quad +2\frac{\int_{\mathbb T^d} \xhi\Div B\dx}{\int_{\mathbb T^d} \xhi\Div \xi\dx}\int_\om \xi\cdot B\d\normmm{\mu}_{\sf}:= \delta\mu(B)-\Vert{ B}\Vert^2_{L^2(\om;\d\normmm{\mu}_{\sf})}+R(\xhi,\mu,B)\int_\om \xhi\Div B\dx .
\end{align*}
Then, recalling \eqref{lower} and \eqref{ctr}, we can estimate
\begin{align*}
    &\normmm{R(\chi,\mu,B)}\\&\leq C(\om,m_0)\left(\norm{\xi}_{C^1(\om)}\normmm{\mu}_{\sf}(\om)+\norm{B}_{C^1(\om)}\mathcal L^{d}({\om})\norm{\xi}_{C(\om)}\normmm{\mu}_{\sf}(\om)\right)\\&\quad +C(\om,m_0)\left(2\mathcal L^{d}({\om})\norm{\xi}_{C(\om)}\norm{B}_{C(\om)}\normmm{\mu}_{\sf}(\om)\right)\\&
    \leq C(\om,m_0)(1+\norm{B}_{C^1(\om)})(1+\normmm{\nabla \xhi}(\om))\normmm{\mu}_{\sf}(\om),
\end{align*}
which gives \eqref{tildeB} and concludes the proof of the lemma.

\end{proof}
\textbf{Acknowledgments.} The authors warmly thank Sebastian Hensel for some useful comments on a preliminary version of the manuscript, especially about the definition of \eqref{slope} and Remark \ref{meancurvature}. AP also gratefully acknowledges support from the Alexander von Humboldt Foundation and is a member of Gruppo Nazionale per l’Analisi Matematica, la Probabilità e le loro Applicazioni (GNAMPA) of
Istituto Nazionale per l’Alta Matematica (INdAM).  

	\bibliographystyle{siam}
	\bibliography{Bib}
	
\end{document}